\documentclass[preprint,12pt]{elsarticle}

\usepackage{amscd}
\usepackage{amsmath}
\usepackage{amsthm}
\usepackage{amsfonts}
\usepackage{graphicx}
\usepackage{amssymb}
\usepackage{color}
\usepackage{amsbsy}
\usepackage{graphicx}
\usepackage{amssymb,color,amsbsy}
\usepackage{mathtools}
\usepackage{leftindex}

\usepackage{float}
\usepackage{comment}

\usepackage{stmaryrd}

\usepackage[ruled]{algorithm2e}

\usepackage[matrix,arrow]{xy}

\DeclareMathAlphabet{\mathpzc}{OT1}{pzc}{m}{it}

\usepackage{pgfpages}
\usepackage{tikz}
\usetikzlibrary{shapes.geometric}
\usetikzlibrary{decorations.pathmorphing}
\usetikzlibrary{arrows}
\usetikzlibrary{backgrounds}
\usetikzlibrary{positioning}
\usetikzlibrary{fit}
\usepackage{caption}
\usepackage{amsthm}

\usepackage{amsmath}

\usepackage[pagebackref=true, bookmarksopen=true,colorlinks=true, linkcolor=red,citecolor=blue]{hyperref}

\usepackage[capitalise]{cleveref}  

\global\long\def\ii{\cap}%

\global\long\def\u{\cup}%

\global\long\def\I{\bigcap}%

\global\long\def\U{\bigcup}%

\global\long\def\s{\subset}%

\global\long\def\ñ{\sim}%

\global\long\def\core#1{\textnormal{core}(#1)}%

\global\long\def\critical#1{\textnormal{CritIndep}(#1)}%

\global\long\def\mcritical#1{\textnormal{MaxCritIndep}(#1)}%

\global\long\def\ker#1{\textnormal{ker}(#1)}%

\global\long\def\nucleus#1{\textnormal{nucleus}(#1)}%

\global\long\def\diadem#1{\textnormal{diadem}(#1)}%

\global\long\def\corona#1{\textnormal{corona}(#1)}%

\global\long\def\a#1{\left|#1\right|}%

\usepackage{tikz-cd}

\usepackage{tkz-graph}

\usepackage{multicol}

\usepackage{subcaption}

\newtheorem{theorem}{Theorem}[section]

\newtheorem{corollary}[theorem]{Corollary}

\newtheorem{lemma}[theorem]{Lemma}

\newtheorem{problem}[theorem]{Problem}

\newtheorem{observation}[theorem]{Observation}

\usepackage{comment}
\begin{document}

	\begin{abstract}
		Let $G$ be a finite simple graph.  An independent set $I$ of $G$ is critical if $\left|I\right|-\left|N(I)\right|\ge\left|J\right|-\left|N(J)\right|$
		for every independent set $J$ of $G$. A critical independent set is maximum if it has maximum cardinality. 	
		The \emph{core} and the \emph{nucleus} of $G$ are defined as the intersection of all maximum independent sets and the intersection of all maximum critical independent sets, respectively. In 2019, Jarden, Levit and Mandrescu posed the problem of characterizing the graphs satisfying $\core G=\nucleus G$. In this paper we provide a complete solution to this problem. Using Larson’s independence decomposition, which partitions any graph into a König--Egerváry component $L_G$ and a $2$-bicritical component $L_G^c$, we establish that $\core G=\nucleus G$ holds if and only if $\core {L_G^c}=\emptyset$ and no vertex of $\corona G$ lies in the boundary between $L_G$ and $L_G^c$. We also show that the same boundary condition is equivalent to the identity $\diadem G=\corona G\ii L(G)$. Several consequences and related structural properties are also derived.
	\end{abstract}

\begin{keyword} 	Maximum Critical Independent Set, Diadem, König--Egerváry
	graph, Corona, Nucleus, 2-bicritical	\MSC 05C70, 05C75 \end{keyword}
\begin{frontmatter} 
	\title{Graphs with $\core G=\nucleus G$}

		\author[LEVIT]{Vadim E. Levit} 	\ead{levitv@ariel.ac.il}
	
	\author[MANDRESCU]{Eugen Mandrescu} 	\ead{eugen_m@hit.ac.il}
	
	\author[IMASL,DEPTO]{Kevin Pereyra} 	\ead{kdpereyra@unsl.edu.ar}


	\address[LEVIT]{Department of Mathematics, Ariel University, Ariel, Israel.}

	\address[MANDRESCU]{Department of Computer Science, Holon Institute of Technology, Holon, Israel.}

	\address[IMASL]{Instituto de Matem\'atica Aplicada San Luis, Universidad Nacional de San Luis and CONICET, San Luis, Argentina.}
	\address[DEPTO]{Departamento de Matem\'atica, Universidad Nacional de San Luis, San Luis, Argentina.} 	
	
	\date{Received: date / Accepted: date} 
	
\end{frontmatter} %

\section{Introduction}\label{sss0}

Let $\alpha(G)$ denote the cardinality of a maximum independent set,
and let $\mu(G)$ be the size of a maximum matching in $G=(V,E)$.
It is known that $\alpha(G)+\mu(G)$ equals the order of $G$,
in which case $G$ is a König--Egerváry graph 
\cite{deming1979independence,gavril1977testing,sterboul1979characterization}.
König--Egerváry graphs have been extensively studied
\cite{bourjolly2009node,jarden2017two,jaume5404413kr,levit2006alpha,levit2012critical}.
It is known that every bipartite graph is a König--Egerváry graph; this follows from classical results of K{\H{o}}nig and Egerv{\'a}ry \cite{egervary1931combinatorial,konig1915}. These graphs were independently introduced by Deming \cite{deming1979independence}, Sterboul \cite{sterboul1979characterization}, and Gavril \cite{gavril1977testing}.

A graph G is considered $2$-bicritical if, after removing any two distinct vertices, the remaining subgraph has a perfect matching.
The notion of $2$-bicritical graphs was introduced
in \cite{pulleyblank1979minimum}, and they can be characterized as follows.
\begin{theorem}[\cite{pulleyblank1979minimum}\label{1928u3123}\label{lmaslmaslmlmlm}]
	A graph $G$ is $2$-bicritical if and only if $\left|N(S)\right|>\left|S\right|$
	for every nonempty independent set $S\subseteq V(G)$.
\end{theorem}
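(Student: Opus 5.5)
The plan is to prove both directions with Tutte's theorem applied to the graphs $G-u-v$, and to translate Tutte obstructions into independent sets with small neighborhoods.

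\emph{Necessity.} Suppose $G$ is $2$-bicritical and $S\neq\emptyset$ is independent with $|N(S)|\le|S|$. If $|S|\ge 2$, pick distinct $u,v\in S$. In $G-u-v$, every vertex of $S\setminus\{u,v\}$ has all its neighbours in $N(S)$. A perfect matching of $G-u-v$ would therefore match $S\setminus\{u,v\}$ injectively into $N(S)$. Moreover, $S$ is independent and $|N(S)|\le|S|$, so the vertices of $N(S)$ left unused number at most $2$. I first need $G$ to have a perfect matching. Since $G-u-v$ has one and is of even order, $|V(G)|$ is even. For any edge $xy$, $G-x-y$ has a perfect matching, which adds to $xy$; edges exist because $G$ has order at least $3$ (implicit in the definition) and is connected. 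A cleaner route is to count directly in $G$. Any perfect matching $M$ of $G$ matches $S$ injectively into $N(S)$, so $|N(S)|=|S|$, and $M$ saturates $N(S)$ by $S$. Now choose $u\in S$ and $w\in N(S)$ and delete them. The perfect matching of $G-u-w$ must match $S\setminus\{u\}$ into $N(S)\setminus\{w\}$, which has equal size $|S|-1$. Then $N(S)\setminus\{w\}$ is fully used by $S\setminus\{u\}$, so $N(S)$ is separated from the rest. I will look for the cleanest contradiction here. The simplest choice is to delete two vertices $w,w'\in N(S)$ when $|N(S)|\ge 2$: then $S$, of size $|S|>|N(S)|-2$, must be matched into $N(S)\setminus\{w,w'\}$, which is impossible. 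If $|N(S)|\le 1$, delete $u\in S$ and the single neighbour; this isolates the other vertices of $S$ (or the order is tiny), a contradiction. The case $N(S)=\emptyset$ means an isolated vertex, also a contradiction.

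\emph{Sufficiency.} Assume $|N(S)|>|S|$ for every nonempty independent $S$, and suppose $G-u-v$ has no perfect matching. By Tutte's theorem (or the Gallai--Edmonds structure) there is a barrier $X\subseteq V(G)\setminus\{u,v\}$ with $o(G-u-v-X)>|X|$. I will use the Gallai--Edmonds decomposition of $H=G-u-v$, with parts $D,A,C$. If $H$ has no perfect matching, $D\neq\emptyset$. Take a maximal matching structure: there is an independent set $S$ formed by picking one vertex from each factor-critical component of $D$, which can be arranged so that $N_H(S)\subseteq A\cup\{\text{own component}\}$. The main obstacle is that the components of $D$ need not be singletons. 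Because of this, I will instead apply Hall-type deficiency directly. By the Kőnig-type formula for graphs with a perfect-matching defect, one can choose $S$ independent in $G$ with $|N_H(S)|\le|S|-2$ when $|V(H)|$ is even; this gives $|N_G(S)|\le|S|$, a contradiction. When $|V(G)|$ is odd the statement fails, so sufficiency must also show that $|V(G)|$ is even from the hypothesis; I expect to obtain this by the same deficiency argument applied to $G$ itself.

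Concretely, I expect the key lemma to be the following: \emph{a graph $H$ in which every independent $S$ satisfies $|N(S)|\ge|S|$ has a perfect matching iff \dots}. This is false in general (odd cycles), so the hardest step is handling the factor-critical odd components. For these I will try Larson's critical independent set theory: the critical difference $d(G)=\max(|I|-|N(I)|)$ is attained, and $d(H)$ controls the matching defect only for König--Egerváry pieces. I expect this step to require the full Pulleyblank argument, which proceeds by induction on deleting an edge-critical structure.
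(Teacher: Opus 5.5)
The paper gives no proof of this statement; it is quoted from Pulleyblank. Your proposal has a genuine gap, and it starts with the definition. You read ``$2$-bicritical'' as ``$G-u-v$ has a perfect matching for all distinct $u,v$'', which is what the sentence in the introduction literally says. That is actually the definition of a \emph{bicritical} graph, and under it the sufficiency direction is false. The triangle $K_3$ and the cycle $C_5$ satisfy $|N(S)|>|S|$ for every nonempty independent $S$, yet they have odd order. For an even, connected example, take two triangles $abc$ and $def$ joined by the edge $cd$: every nonempty independent set satisfies $|N(S)|\ge 2|S|$, but $b$ is isolated in $G-a-c$. So the step where you hope to derive that $|V(G)|$ is even from the hypothesis cannot be carried out. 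Likewise, the independent set with $|N_H(S)|\le|S|-2$ that you want to extract from a Tutte barrier of $H=G-u-v$ does not exist in general; the factor-critical odd components you flag as the obstacle are exactly the counterexamples. The literal reading also breaks necessity in degenerate cases such as $K_2$. It also contradicts the paper's own later use, where $L_G^c$ is an odd cycle for almost-bipartite graphs and is supposed to be $2$-bicritical.

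The missing idea is that Pulleyblank's notion concerns perfect $2$-matchings. $G$ is $2$-bicritical if $G-v$ has a perfect $2$-matching for every vertex $v$; equivalently, $G-v$ has a spanning subgraph whose components are single edges and odd cycles, or a fractional perfect matching. The right tool is then Tutte's perfect $2$-matching theorem, not the $1$-factor theorem or the Gallai--Edmonds decomposition: $H$ has a perfect $2$-matching if and only if $|N_H(S)|\ge|S|$ for every independent $S\subseteq V(H)$. With it, both directions take one line.
\begin{itemize}
\item \emph{Sufficiency.} If $|N_G(S)|>|S|$ for all nonempty independent $S$, then for every $v$ and every nonempty independent $S\subseteq V(G)\setminus\{v\}$ we get $|N_{G-v}(S)|\ge|N_G(S)|-1\ge|S|$.
\item \emph{Necessity.} Suppose some nonempty independent $S$ has $|N_G(S)|\le|S|$. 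If $N_G(S)\neq\emptyset$, delete a vertex $w\in N_G(S)$, so that $|N_{G-w}(S)|=|N_G(S)|-1<|S|$. If $N_G(S)=\emptyset$, delete any vertex other than some $u\in S$ (for $|V(G)|\ge 2$), leaving $u$ isolated. Either way $G-w$ has no perfect $2$-matching.
\end{itemize}
Your necessity argument does not transfer to this setting either, because it starts from a perfect matching of $G$, and $2$-bicritical graphs such as $C_5$ have none.
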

The class of $2$-bicritical graphs can be regarded as the structural counterpart of König--Egerváry graphs \cite{larson2011critical,kevinSDKEGE,kevinPOSYFACTOR,kevinSDKECHAR}. It is important to note that \cite{pulleyblank1979minimum} shows that almost every graph is a $2$-bicritical graph.

The main tool used in this work is Larson's independence decomposition \cite{larson2011critical}, which partitions a graph into two parts: one that induces a König--Egerváry graph $L_G$, and another that induces a $2$-bicritical graph $L_G^{c}$.

Let $\Omega^{*}(G)=\left\{ S:S\textnormal{ is an independent set of }G\right\}$,
$\Omega(G)=\{S:S$ is a maximum independent set of $G\}$,
$\textnormal{core}(G)=\I\left\{ S:S\in\Omega(G)\right\}$ 
\cite{levit2003alpha+}, and 
$\textnormal{corona}(G)=\U\left\{ S:S\in\Omega(G)\right\}$ 
\cite{boros2002number}. 
The number $d_{G}(X)=\left|X\right|-\left|N(X)\right|$ is the 
difference of the set $X\s V(G)$, and 
$d(G)=\max\{d_{G}(X):X\s V(G)\}$ is called the \emph{critical difference} of $G$.
A set $U\s V(G)$ is \emph{critical} if $d_{G}(U)=d(G)$ 
\cite{zhang1990finding}. 
The number $d_{I}(G)=\max\left\{ d_{G}(X):X\in\Omega^{*}(G)\right\}$ 
is called the \emph{critical independence difference} of $G$. 
If a set $X\s\Omega^{*}(G)$ satisfies $d_{G}(X)=d_{I}(G)$, then it is called 
a \emph{critical independent set} \cite{zhang1990finding}. 
Clearly, $d(G)\ge d_{I}(G)$ holds for every graph. 
It is known that $d(G)=d_{I}(G)$ for all graphs 
\cite{zhang1990finding}. 
We define 
$\critical{G}=\{ S:S$ is a critical independent set of $G\}$ and \\ 
$\mcritical{G}=\{S:S$ is a maximum critical independent 
set of $G\}$. Recall the following:
$\textnormal{ker}(G)=\I\critical{G}$ 
\cite{levit2012vertices,lorentzen1966notes,schrijver2003combinatorial},
$\textnormal{nucleus}(G)=\I\mcritical{G}$ \cite{jarden2019monotonic}, and 
$\textnormal{diadem}(G)=\U\critical{G}$ 
\cite{short2015some}.  Actually, every critical independent set is contained in a
maximum critical independent set, and a maximum critical independent set can be found in polynomial time \cite{larson2007note}.  Also note that $\diadem{G}=\U\critical{G}=\U\mcritical{G}$.

\medskip

Several interesting phenomena occur when $\core G$ is a critical independent set, and these have been studied in \cite{jarden2019monotonic}. In \cite{lmkcorecritical}, the graphs for which $\core G$ is a critical independent set are completely characterized.

\begin{theorem}[\label{iasjdpoaisjdpio}\cite{lmkcorecritical}]
	For every graph, $\core G$ is a critical independent set if and only if $\core{L_{G}^{c}}=\emptyset$.
\end{theorem}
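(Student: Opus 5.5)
We need to prove: $\core G$ is critical iff $\core{L_G^c}=\emptyset$. I will rely on the standard facts about Larson's decomposition. Every maximum critical independent set $I$ gives $L_G = I\cup N(I)$, and this set is the same for every choice of $I$. $G[L_G]$ is König--Egerváry and $G[L_G^c]$ is $2$-bicritical. Moreover, for every $S\in\Omega(G)$, the sets $S\cap L_G$ and $S\cap L_G^c$ are maximum independent sets of $G[L_G]$ and $G[L_G^c]$ respectively, so $\alpha(G)=\alpha(L_G)+\alpha(L_G^c)$. Conversely, a maximum critical independent set $I$ extends to a maximum independent set of $G$ by adding any element of $\Omega(L_G^c)$, since $N(I)\subseteq L_G$ separates $I$ from $L_G^c$. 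From these facts I get
\[
\core G=\core{L_G}\cup\core{L_G^c}.
\]
For the inclusion $\supseteq$: every $S\in\Omega(G)$ restricts to maximum independent sets on both parts, so it contains both cores. For the inclusion $\subseteq$: independent choices on the two sides can be recombined. Any $A\in\Omega(L_G)$ that is of the form $I$ (a maximum critical set, which lies in $\Omega(L_G)$ because $L_G$ is König--Egerváry with $I$ covering it) combines with any $B\in\Omega(L_G^c)$. The intersections then reduce correctly, provided I also show $\core{L_G}=\bigcap\{I: I \text{ maximum critical}\}\cap\core{}$, that is, that $\core{L_G}$ is witnessed by sets that recombine. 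To be safe, I will instead argue that every $S\in\Omega(G)$ has $S\cap L_G\in\Omega(L_G)$ and contains $\core{L_G}$.

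($\Leftarrow$) Assume $\core{L_G^c}=\emptyset$. Then $\core G\subseteq L_G$, and more precisely $\core G=\core{L_G}$, which is the core of a König--Egerváry graph. For a König--Egerváry graph $H$ the core is critical; this is a known result of Levit--Mandrescu, where $\core H$ is critical and $d(H)=|\core H|-|N(\core H)|$. I must pass from criticality in $G[L_G]$ to criticality in $G$. Because $\core G\subseteq S$ for all $S\in\Omega(G)$, including $I\cup B$ with $B\in\Omega(L_G^c)$, the set $\core G$ is contained in every maximum critical $I$, so $\core G\subseteq I$. Its neighborhood then lies in $I\cup N(I)=L_G$, so $N_G(\core G)=N_{L_G}(\core G)$. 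Finally $d(G)=d_G(I)=d_{L_G}(I)=d(L_G)$, since any independent $J$ splits as $J_1\cup J_2$ along the decomposition with $d(J_2)\le 0$ in the $2$-bicritical part. Hence $d_G(\core G)=d(L_G)=d(G)$.

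($\Rightarrow$) Assume $\core G$ is critical. By Theorem~\ref{lmaslmaslmlmlm}, every nonempty independent $X\subseteq L_G^c$ has $|N_{L_G^c}(X)|>|X|$. Write $C=\core G=C_1\cup C_2$ with $C_1\subseteq L_G$ and $C_2=\core{L_G^c}$, and suppose $C_2\neq\emptyset$. Then
\[
d_G(C)\le |C_1|-|N(C_1)\cap L_G|+|C_2|-|N(C_2)\cap L_G^c|.
\]
The neighbors of $C_1$ may lie in $L_G^c$, but this only decreases $d_G(C)$, so the inequality still holds. Here $|C_2|-|N_{L_G^c}(C_2)|<0$, while $|C_1|-|N_{L_G}(C_1)|\le d(L_G)=d(G)$. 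Thus $d_G(C)<d(G)$, a contradiction.

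The main obstacle is justifying the decomposition facts: the restriction property $S\cap L_G^c\in\Omega(L_G^c)$, and $d(G)=d(L_G)$. I would cite Larson's theorem for them, or prove them by exchange arguments with a maximum critical $I$ together with the Hall-type surplus condition on $L_G^c$.
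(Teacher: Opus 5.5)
Your ($\Rightarrow$) direction is sound. It uses only $\core{G}\cap L^c(G)=\core{L_G^c}$, which your recombination argument ($I\cup B$ with $I$ maximum critical and $B\in\Omega(L_G^c)$ arbitrary) does prove, together with $d(G)=d(L_G)$ and the strict surplus $|N_{L_G^c}(C_2)|>|C_2|$.

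The gap is in ($\Leftarrow$). It rests on $\core{G}=\core{L_G}\cup\core{L_G^c}$, specifically on $\core{G}\cap L(G)=\core{L_G}$. Only $\core{L_G}\subseteq\core{G}\cap L(G)$ is true. The traces $S\cap L(G)$ with $S\in\Omega(G)$ form a subfamily of $\Omega(L_G)$, and this subfamily can be proper: a maximum independent set of $L_G$ containing boundary vertices need not extend to one of $G$. You flagged exactly this (``witnessed by sets that recombine''), but your fallback only re-proves the easy inclusion.

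Here is a concrete failure. Let $G$ be $K_4$ on $\{u,a,b,c\}$ plus a vertex $v$ adjacent only to $u$. Then $d(G)=0$ and the unique maximum critical independent set is $\{v\}$. So $L(G)=\{u,v\}$, $L_G^c$ is the triangle $abc$, and $\core{L_G^c}=\emptyset$. Since $\Omega(L_G)=\{\{u\},\{v\}\}$, we get $\core{L_G}=\emptyset$. But $\Omega(G)=\{\{v,a\},\{v,b\},\{v,c\}\}$, so $\core{G}=\{v\}$, which is indeed critical, as the theorem predicts. Thus $\core{G}$ is not the core of the König--Egerváry graph $L_G$, and the Levit--Mandrescu result you invoke says nothing about it. This is why the paper's lemma writes $\core{G}\cap L(G)=\bigcap_{S\in\Omega(G)}(S\cap L(G))$ rather than $\core{L_G}$. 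The extra vertices come from the term $M(N(I)-\corona{G})$ in \cref{asdklasdookok}, which involves $\corona{G}$ rather than $\corona{L_G}$.

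The conclusion of ($\Leftarrow$) is still true, and your surrounding steps can be kept; what must change is the reason $\core{G}$ is critical in $L_G$.
From $\core{L_G^c}=\emptyset$ you correctly get $\core{G}\subseteq I$. Hence $N_G(\core{G})\subseteq N(I)\subseteq L(G)$ and $\core{G}=\bigcap_{S\in\Omega(G)}(S\cap L(G))$.
Each $S\cap L(G)$ lies in $\Omega(L_G)$, so it is critical in $L_G$ by \cref{aspijk123}.
Since $N(X\cup Y)=N(X)\cup N(Y)$ and $N(X\cap Y)\subseteq N(X)\cap N(Y)$, the difference is supermodular: $d(X\cup Y)+d(X\cap Y)\ge d(X)+d(Y)$. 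Therefore an intersection of finitely many critical sets is critical.
This gives $d_G(\core{G})=d_{L_G}(\core{G})=d(L_G)=d(G)$.

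One smaller point: your splitting argument only yields $d(G)\le d(L_G)$. The reverse inequality needs that $I\in\Omega(L_G)$ is critical in $L_G$ (again \cref{aspijk123}), or you can simply cite \cref{asdmasdkmaskdm}.
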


However, the condition in \cref{iasjdpoaisjdpio} does not ensure the equality $\core G=\nucleus G$, as demonstrated by the example in \cref{adsasdasd123}.

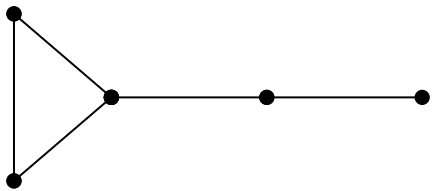
\begin{figure}[H]
	\begin{center}

		\tikzset{every picture/.style={line width=0.75pt}} 
		
		\begin{tikzpicture}[x=0.75pt,y=0.75pt,yscale=-1,xscale=1]
			
			\draw    (107.5,210.03) -- (107.5,125.73) ;
			\draw [shift={(107.5,125.73)}, rotate = 270] [color={rgb, 255:red, 0; green, 0; blue, 0 }  ][fill={rgb, 255:red, 0; green, 0; blue, 0 }  ][line width=0.75]      (0, 0) circle [x radius= 3.35, y radius= 3.35]   ;
			\draw [shift={(107.5,210.03)}, rotate = 270] [color={rgb, 255:red, 0; green, 0; blue, 0 }  ][fill={rgb, 255:red, 0; green, 0; blue, 0 }  ][line width=0.75]      (0, 0) circle [x radius= 3.35, y radius= 3.35]   ;
			\draw    (107.5,125.73) -- (156.63,167.88) ;
			\draw [shift={(156.63,167.88)}, rotate = 40.63] [color={rgb, 255:red, 0; green, 0; blue, 0 }  ][fill={rgb, 255:red, 0; green, 0; blue, 0 }  ][line width=0.75]      (0, 0) circle [x radius= 3.35, y radius= 3.35]   ;
			\draw [shift={(107.5,125.73)}, rotate = 40.63] [color={rgb, 255:red, 0; green, 0; blue, 0 }  ][fill={rgb, 255:red, 0; green, 0; blue, 0 }  ][line width=0.75]      (0, 0) circle [x radius= 3.35, y radius= 3.35]   ;
			\draw    (156.63,167.88) -- (107.5,210.03) ;
			\draw [shift={(107.5,210.03)}, rotate = 139.37] [color={rgb, 255:red, 0; green, 0; blue, 0 }  ][fill={rgb, 255:red, 0; green, 0; blue, 0 }  ][line width=0.75]      (0, 0) circle [x radius= 3.35, y radius= 3.35]   ;
			\draw [shift={(156.63,167.88)}, rotate = 139.37] [color={rgb, 255:red, 0; green, 0; blue, 0 }  ][fill={rgb, 255:red, 0; green, 0; blue, 0 }  ][line width=0.75]      (0, 0) circle [x radius= 3.35, y radius= 3.35]   ;
			\draw    (235,167.88) -- (156.63,167.88) ;
			\draw [shift={(156.63,167.88)}, rotate = 180] [color={rgb, 255:red, 0; green, 0; blue, 0 }  ][fill={rgb, 255:red, 0; green, 0; blue, 0 }  ][line width=0.75]      (0, 0) circle [x radius= 3.35, y radius= 3.35]   ;
			\draw [shift={(235,167.88)}, rotate = 180] [color={rgb, 255:red, 0; green, 0; blue, 0 }  ][fill={rgb, 255:red, 0; green, 0; blue, 0 }  ][line width=0.75]      (0, 0) circle [x radius= 3.35, y radius= 3.35]   ;
			\draw    (156.63,167.88) ;
			\draw [shift={(156.63,167.88)}, rotate = 0] [color={rgb, 255:red, 0; green, 0; blue, 0 }  ][fill={rgb, 255:red, 0; green, 0; blue, 0 }  ][line width=0.75]      (0, 0) circle [x radius= 3.35, y radius= 3.35]   ;
			\draw [shift={(156.63,167.88)}, rotate = 0] [color={rgb, 255:red, 0; green, 0; blue, 0 }  ][fill={rgb, 255:red, 0; green, 0; blue, 0 }  ][line width=0.75]      (0, 0) circle [x radius= 3.35, y radius= 3.35]   ;
			\draw    (313.37,167.88) -- (235,167.88) ;
			\draw [shift={(235,167.88)}, rotate = 180] [color={rgb, 255:red, 0; green, 0; blue, 0 }  ][fill={rgb, 255:red, 0; green, 0; blue, 0 }  ][line width=0.75]      (0, 0) circle [x radius= 3.35, y radius= 3.35]   ;
			\draw [shift={(313.37,167.88)}, rotate = 180] [color={rgb, 255:red, 0; green, 0; blue, 0 }  ][fill={rgb, 255:red, 0; green, 0; blue, 0 }  ][line width=0.75]      (0, 0) circle [x radius= 3.35, y radius= 3.35]   ;

		\end{tikzpicture}

	\end{center}
	\caption{A graph satisfying $\core {L^c_G}=\emptyset$ while $\core G\neq\nucleus G$}.
	\label{adsasdasd123}
\end{figure}

When $\core G$ is a critical independent set, it is known that $\core G \subseteq \nucleus G$ \cite{jarden2019monotonic}, and equality is valid when $\diadem G=\corona G$. Consequently, in \cite{jarden2019monotonic} the following problem is posed: 

\begin{problem}[\label{asdpoasok}\cite{jarden2019monotonic}\label{ioasdoiasi12}]
	Characterize the graphs enjoying $\core{G}=\nucleus{G}$.
\end{problem}

It is known that every König--Egerváry graph satisfies $\core G=\nucleus G$ \cite{levit2012critical}. In this work we solve \cref{asdpoasok} by showing that, in addition to the condition $\core{L_{G}^{c}}=\emptyset$ from \cref{iasjdpoaisjdpio}, the only extra obstruction is the presence of vertices of $\corona G$ on the boundary between the two sides of Larson's partition. Our approach also incorporates the diadem: we prove that $\corona G\ii\partial_{L}(G)=\emptyset$ is equivalent to $\diadem G=\corona G\ii L(G)$.

 The paper is organized as follows. 
 In \cref{sss0} we present the general context of the problem and introduce the main concepts and definitions. 
 In \cref{sss1} we fix the notation that will be used throughout the paper. 
 In \cref{sss2} we establish the characterization of graphs with $\core G=\nucleus{G}$, together with companion results describing the corresponding diadem, thereby solving \cref{asdpoasok}.  
 Finally, \cref{open} discusses concluding remarks and directions for further research.

\section{Preliminaries}\label{sss1}
All graphs considered in this paper are finite, undirected, and simple. 
For any undefined terminology or notation, we refer the reader to 
Lovász and Plummer \cite{LP} or Diestel \cite{Distel}.

Let \( G = (V, E) \) be a simple graph, where \( V = V(G) \) is the finite set of vertices and \( E = E(G) \) is the set of edges.  A subgraph of \( G \) is a graph \( H \) such that \( V(H) \subseteq V(G) \) and \( E(H) \subseteq E(G) \). A subgraph \( H \) of \( G \) is called a \textit{spanning} subgraph if \( V(H) = V(G) \). For two v sets $X,Y\subseteq V(G)$, we denote by $E(X,Y)$ the set of
edges $uv\in E(G)$ such that $u\in X$ and $v\in Y$.

Let \( e \in E(G) \) and \( v \in V(G) \). We define \( G - e = (V, E - \{e\}) \) and \( G - v = (V - \{v\}, \{uw \in E : u,w \neq v\}) \). If \( X \subseteq V(G) \), the \textit{induced} subgraph of \( G \) by \( X \) is the subgraph \( G[X]=(X,F) \), where \( F=\{uv \!\in\! E(G) : u, v \!\in \! X\} \). The union of two graphs $G$ and $H$ is the graph $G\cup H$
with $V(G\cup H)=V(G)\cup V(H)$ and $E(G\cup H)=E(G)\cup E(H)$.

The number of vertices in a graph $G$ is called the \textit{order} of the graph and is denoted $n(G)$.
A \textit{cycle} in $G$ is called \textit{odd} (resp. \textit{even}) if it has an odd (resp. even) number of edges.

For a vertex $v\in V(G)$, the \emph{neighborhood} of $v$ is
\[
N_G(v)=\{u\in V(G): uv\in E(G)\}.
\]
When no confusion arises, we write $N(v)$ instead of $N_G(v)$. For a set $S\subseteq V(G)$, the \emph{neighborhood} of $S$ is
\[
N_G(S)=\bigcup_{v\in S} N_G(v).
\]

A \textit{matching} \(M\) in a graph \(G\) is a set of pairwise non-adjacent edges. The \textit{matching number} of \(G\), denoted by  \(\mu(G)\), is the maximum cardinality of any matching in \(G\). Matchings induce an involution on the vertex set of the graph: \(M:V(G)\rightarrow V(G)\), where \(M(v)=u\) if \(uv \in M\), and \(M(v)=v\) otherwise. If \(S, U \subseteq V(G)\) with \(S \cap U = \emptyset\), we say that \(M\) is a matching from \(S\) to \(U\) if \(M(S) \subseteq U\). A matching $M$ is \emph{perfect} if $M(v)\neq v$ for every vertex
of the graph. A matching is \emph{near-perfect} if \( \left|{v \in V(G) : M(v) = v}\right| = 1 \).  A graph is a factor-critical graph if $G-v$ has perfect matching
for every vertex $v\in V(G)$. The \emph{deficiency} of a graph $G$, denoted by $\textnormal{def}(G)$,
is defined as $\textnormal{def}(G)=\a G-2\mu(G)$.

A vertex set \( S \subseteq V \) is \textit{independent} if, for every pair of vertices \( u, v \in S \), we have \( uv \notin E \). 
The number of vertices in a maximum independent set is denoted by \( \alpha(G) \).

The Gallai–Edmonds decomposition will play an important role in this work.

\begin{theorem}
	[\label{ge}\cite{edmonds1965paths,gallai1964maximale} Gallai--Edmonds
	structure theorem]
	Let $G$ be a graph, and define
	\begin{align*}
		D(G) & = \{ v : \textnormal{there exists a maximum matching that misses } v \}, \\
		A(G) & = \{ v : v \textnormal{ is adjacent to some } u \in D(G), \textnormal{ but } v \notin D(G) \}, \\
		C(G) & = V(G) - (D(G) \cup A(G)).
	\end{align*}
	
	\noindent If $G_{1}, \dots, G_{k}$ are the connected components of $G[D(G)]$
	and $M$ is a maximum matching of $G$, then:
	\begin{enumerate}
		\item $M$ covers $C(G)$ and matches $A(G)$ into distinct components of $G[D(G)]$.
		\item Each $G_i$ is a factor-critical graph, and the restriction of $M$ to $G_i$ 
		is a near-perfect matching.
		\item Each nonempty $S\subseteq A(G)$ is adjacent to at least $|S|+1$ components of $G[D(G)]$.
	\end{enumerate}
\end{theorem}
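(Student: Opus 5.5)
This statement is the classical Gallai--Edmonds structure theorem, and I would prove it in the Lovász--Plummer style. The proof would rest on two auxiliary facts, \emph{Gallai's lemma} and a \emph{stability lemma}, followed by a counting argument.

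\emph{Gallai's lemma.} If $H$ is connected and every vertex of $H$ is missed by some maximum matching, then $H$ is factor-critical.

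Suppose instead that some maximum matching misses two vertices. Among all pairs $(M,\{u,v\})$ with $M$ a maximum matching missing both $u$ and $v$, choose one minimizing $\mathrm{dist}(u,v)$. The distance is at least $2$, since otherwise $M+uv$ would be a larger matching. So an interior vertex $w$ of a shortest $u$--$v$ path exists, and by minimality $M$ covers $w$. Pick a maximum matching $M'$ missing $w$ with $|M\cap M'|$ maximal. By minimality of the distance, $M'$ covers both $u$ and $v$.

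Now consider $M\triangle M'$. The component containing $u$ is an even alternating path from $u$. Swapping along it produces a maximum matching that misses both $u$ and $w$, which are closer than $u$ and $v$. If that path passes through $w$ or $v$, the same swap instead contradicts the maximality of $|M\cap M'|$. Either way we reach a contradiction.

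\emph{Stability lemma.} For $a\in A(G)$ we have
\[
D(G-a)=D(G),\qquad A(G-a)=A(G)-a,\qquad C(G-a)=C(G),\qquad \mu(G-a)=\mu(G)-1 .
\]
\begin{enumerate}
\item Since $a\notin D(G)$, every maximum matching covers $a$, so $\mu(G-a)=\mu(G)-1$.
\item A maximum matching of $G-a$ extends to a maximum matching of $G$ by adding $a$'s partner edge. Using this, one checks $D(G-a)\supseteq D(G)$.
\item For the reverse inclusion, take $x\in D(G-a)\setminus D(G)$ and a maximum matching $N$ of $G-a$ missing $x$. Pick $d\in D(G)$ adjacent to $a$, and a maximum matching $M$ of $G$ missing $d$. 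Analysing the alternating path of $M\triangle N$ starting at $x$ produces a maximum matching of $G$ missing either $x$ or $a$. Both outcomes contradict the assumption that $x$ and $a$ lie outside $D(G)$.
\end{enumerate}
I expect this lemma, specifically the case analysis on which endpoint the alternating path reaches, to be the main obstacle.

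\emph{Removing $A$.} Iterating the stability lemma over all of $A=A(G)$ gives
\[
\mu(G-A)=\mu(G)-|A|,\qquad D(G-A)=D(G),\qquad C(G-A)=C(G).
\]
In $G-A$ there are no edges between $D$ and $C$, by the definition of $A$. Hence each component of $G[D]$ has all of its vertices missed by some maximum matching of that component, and Gallai's lemma makes it factor-critical. On the other side, $G[C]$ has no vertex in $D(G-A)$, so it has a perfect matching.

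\emph{Deriving items 1 and 2.} Let $M$ be any maximum matching of $G$ and let $k$ be the number of components of $G[D]$. The upper bound comes from
\[
|M|\le |A|+\mu(G[C])+\sum_i \frac{|G_i|-1}{2}+\#\{\text{components entered by } A\},
\]
and this must be attained, since $\mu(G)=\mu(G-A)+|A|$. Equality forces three things:
\begin{itemize}
\item every vertex of $A$ is matched to $D$, into distinct components;
\item $C$ is perfectly matched;
\item each $M|_{G_i}$ is near-perfect.
\end{itemize}

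\emph{Item 3.} Suppose some nonempty $S\subseteq A$ is adjacent to only $\le|S|$ components. By item 1, every maximum matching sends $S$ injectively into these components, so each such component $K$ receives exactly one vertex from $A$. The remaining $|K|-1$ vertices of $K$ are then perfectly matched inside $K$, so $K$ is fully covered by every maximum matching. This contradicts $K\subseteq D(G)$.
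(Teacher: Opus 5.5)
The paper gives no proof of this theorem; it is quoted from Gallai and Edmonds. Your architecture (Gallai's lemma, the stability lemma, then a counting argument) is the standard Lov\'asz--Plummer one, so the plan is right. As written, however, two steps fail.

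\emph{Counting.} Since $G-A$ is the disjoint union of $G[C]$, which has a perfect matching, and the factor-critical $G_i$, you have $\mu(G)=|A|+|C|/2+\sum_i(|G_i|-1)/2$. For a given maximum $M$, the right-hand side of your displayed bound therefore equals $\mu(G)+\#\{\text{components entered by }A\}$. It is attained only if no vertex of $A$ is matched into $D$, which is the opposite of item 1. The extra term double-counts $A$--$D$ edges already counted in $|A|$.

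The bound you want is $|M|\le|A|+|C|/2+\sum_i(|G_i|-1)/2$. Split $M$ into edges meeting $A$, edges inside $C$, and edges inside each $G_i$; there are no $D$--$C$ edges and none between components. Equality then forces your three conclusions, because each violation costs an edge:
\begin{itemize}
\item an $A$--$A$ edge or an uncovered vertex of $A$ pushes the first term below $|A|$;
\item an $A$--$C$ edge pushes the second below $|C|/2$;
\item two vertices of $A$ matched into the same $G_i$ leave at most $(|G_i|-3)/2$ edges inside $G_i$.
\end{itemize}
Your extra term belongs in the vertex count $2|M|\le|A|+|C|+\sum_i(|G_i|-1)+\#\{\text{entered}\}$, which together with $\#\{\text{entered}\}\le|A|$ also works.

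\emph{Gallai's lemma.} The vertex $v$ never lies on the component $P_u$ of $M\triangle M'$: it is $M$-exposed and $M'$-covered, whereas the far end of $P_u$ is $M'$-exposed. If $P_u$ ends at $w$, then $M'\triangle P_u$ covers $w$. So it is not a competitor for the maximality of $|M\cap M'|$, and no contradiction follows. In that case swap $M$ instead: $M\triangle P_u$ is a maximum matching missing $w$ and $v$, and $\mathrm{dist}(w,v)<\mathrm{dist}(u,v)$. With this fix, the maximality of $|M\cap M'|$ is not needed at all.

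\emph{Stability lemma.} Two sentences need correcting. First, your claim that a maximum matching of $G-a$ extends to one of $G$ by an edge at $a$ is false. Join $a$ to one vertex of each of two disjoint triangles $d_1pq$ and $d_2rs$. Then $a\in A(G)$, and $\{d_1p,d_2r\}$ is a maximum matching of $G-a$ covering both neighbors of $a$. What gives $D(G)\subseteq D(G-a)$ is the restriction direction: deleting the edge at $a$ from a maximum matching of $G$ that misses $d$ yields a maximum matching of $G-a$ missing $d$.

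Second, for the reverse inclusion, the path $P$ of $M\triangle N$ starting at $x$ (with an $M$-edge) has three possible ends:
\begin{itemize}
\item an $M$-exposed vertex, so $M\triangle P$ is a maximum matching of $G$ missing $x$;
\item the vertex $a$, so $(M\triangle P)+ad$ has $|M|$ edges and misses $x$; here $d\notin P$, since $d$ is $M$-exposed and is neither end;
\item another $N$-exposed vertex of $G-a$, so $P$ is $N$-augmenting in $G-a$.
\end{itemize}
Thus you always get a maximum matching missing $x$ or contradict the maximality of $N$; you never get one missing $a$. The third case was missing from your sketch.

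With these repairs, the remaining steps go through as you wrote them: iterating over $A$, applying Gallai's lemma to the components of $G[D]$, the perfect matching of $G[C]$, and your argument for item 3.
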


\section{Main Results}\label{sss2}

In this section we establish the main result of the paper. We first prove several preliminary lemmas, and then obtain a characterization of the graphs whose core equals the nucleus.
In \cite{larson2011critical}, Larson introduces the following decomposition theorem.

\begin{theorem}[\cite{larson2011critical}\label{larsonthm}]
	For any graph $G$, there is a unique set $L(G)\s V(G)$
	such that
	\begin{enumerate}
		\item $\alpha(G)=\alpha(G[L])+\alpha(G[V(G)-L(G)])$,
		\item $G[L(G)]$ is a König-Egerváry graph,
		\item for every non-empty independent set $I$ in $G[V(G)-L(G)]$, we have $\left|N(I)\right|>\left|I\right|,$
		and
		\item for every maximum critical independent set $J$ of $G$,  $L(G)=J\u N(J)$.
	\end{enumerate}
\end{theorem}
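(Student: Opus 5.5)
We prove \cref{larsonthm} by building $L(G)$ from a fixed maximum critical independent set. Choose $J\in\mcritical{G}$ and set $L=J\u N(J)$. We will verify properties (1)--(3) for this $L$. Then we show that $J\u N(J)$ is the same set for every $J\in\mcritical{G}$, which gives (4). Finally we show that (1)--(3) force $L$ to equal this set, which gives uniqueness.

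\emph{Step 1: $G[L]$ is König--Egerváry.} Since $J$ is critical, every $S\s N(J)$ satisfies $\a{N(S)\ii J}\ge\a S$. Otherwise we could delete from $J$ the vertices of $J$ whose neighbours lie in $N(J)$ only within the deficient part, and $d$ would increase. More precisely, if $\a{N(S)\ii J}<\a S$, then $J'=J-N(S)$ satisfies $d(J')\ge d(J)-\a{N(S)\ii J}+\a S>d(J)$, contradicting criticality. By Hall's theorem there is a matching of $N(J)$ into $J$. Hence $\mu(G[L])\ge\a{N(J)}$, and $J$ is an independent set of size $\a L-\a{N(J)}$. This gives $\alpha(G[L])+\mu(G[L])\ge\a L$, so $G[L]$ is König--Egerváry with $\alpha(G[L])=\a J$.

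\emph{Step 2: property (3).} Let $I\neq\emptyset$ be independent in $G-L$ with $\a{N_{G-L}(I)}\le\a I$. Then $J\u I$ is independent, because $I$ misses $N(J)$. Its neighbourhood is contained in $N(J)\u N_{G-L}(I)$, since $N(I)\ii J=\emptyset$. Therefore $d(J\u I)\ge d(J)$, so $J\u I$ is critical and strictly larger than $J$, contradicting maximality.

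\emph{Step 3: property (1).} Take a maximum independent set $S$ of $G$. We have $\a{S\ii L}\le\a J$ by Step 1. Using the matching from Step 1, we will show that $(S-L)\u J$ is independent and at least as large as $S$. The point is that $S\ii N(J)$ is matched into $J$ by distinct edges, so $\a{S\ii L}\le\a J$. Independence also holds, since $J$ has no neighbours outside $L$. This gives (1).

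\emph{Step 4: property (4) and uniqueness, the main obstacle.} Let $J_1,J_2\in\mcritical{G}$. We use the supermodularity of $d$ on independent sets: $d(J_1\u J_2')+d(J_1\ii J_2)\ge d(J_1)+d(J_2)$, where $J_2'=J_2-N(J_1)$. This is the standard argument showing that unions and intersections of critical sets are critical after a suitable correction. From it we deduce that $J_1\u J_2'$ is critical, so by maximality $J_2'\s J_1$. Hence every vertex of $J_2-J_1$ lies in $N(J_1)$, and symmetrically. Combined with the Hall matchings, this yields $J_1\u N(J_1)=J_2\u N(J_2)$. I expect this step to be the delicate one, and would first try a symmetric-difference counting argument using the two matchings.

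For uniqueness, let $L'$ satisfy (1)--(3). Property (3) says $G-L'$ has no critical part. A maximum critical independent set of $G[L']$ together with (1) then produces a maximum critical set of $G$ whose closed neighbourhood is $L'$. By the first part of this step, $L'=L$.
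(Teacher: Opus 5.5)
Steps 1--3 of your proposal are sound. The paper quotes this theorem from Larson without proof, so your argument has to stand on its own. Step 1 is the standard Hall argument. Step 2 even gives (3) with neighbourhoods taken inside $G-L$. For (1), the operative fact is that $J\cup R$ is independent for every maximum independent set $R$ of $G-L$, because $N(J)\subseteq L$.

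\textbf{Uniqueness.} The genuine error is your uniqueness argument: properties (1)--(3) do \emph{not} determine $L$. Take $K_4$ on $\{a,b,c,d\}$ with a pendant vertex $p$ attached to $a$. Here $d_I(G)=0$ and $\{p\}$ is the unique maximum critical independent set, so $L(G)=\{p,a\}$. But $L'=\{p\}$ also satisfies (1)--(3):
\begin{itemize}
\item $\alpha(G)=2=\alpha(K_1)+\alpha(K_4)$;
\item $K_1$ is König--Egerváry;
\item every nonempty independent set of $G-L'=K_4$ is a singleton with three neighbours inside $K_4$.
\end{itemize}
Your claim that a maximum critical independent set of $G[L']$ has closed neighbourhood $L'$ in $G$ fails exactly here. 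Nothing in (1)--(3) prevents such a set from having neighbours in $V(G)-L'$; in this example $pa\in E(G)$. Uniqueness in the theorem is meant with (4) included, and then it is immediate: any set satisfying (4) equals $J\cup N(J)$ for your fixed $J\in\mcritical{G}$, which exists.

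\textbf{Step 4.} Two justifications need repair.

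First, the inequality $d(J_1\cup J_2')+d(J_1\cap J_2)\ge d(J_1)+d(J_2)$ is true, but it is not supermodularity. Supermodularity only gives $d(J_1\cup J_2')+d(J_1\cap J_2)\ge d(J_1)+d(J_2')$, and $d(J_2')\le d(J_2)$ goes the wrong way. Here is a correct argument. Set $X=J_2\cap N(J_1)$. The left side minus the right side equals $|N(J_2)|-|N(J_2')-N(J_1)|-|N(J_1\cap J_2)|-|X|$. The sets $N(J_2')-N(J_1)$, $N(J_1\cap J_2)$ and $J_1\cap N(J_2)$ are pairwise disjoint subsets of $N(J_2)$. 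Hence the difference is at least $|J_1\cap N(J_2)|-|X|\ge|N(X)\cap J_1|-|X|\ge 0$, where the last step is the Hall condition of Step 1 applied to $J_1$.

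Second, $J_2\subseteq L_1:=J_1\cup N(J_1)$ does not by itself give $N(J_2)\subseteq L_1$. Argue as follows. The matching of $N(J_1)$ into $J_1$ has $|N(J_1)|$ edges and leaves $|J_1|-|N(J_1)|$ vertices of $L_1$ uncovered. Since $|J_2|=|J_1|=\alpha(G[L_1])$, the set $J_2$ must contain exactly one endpoint of each matching edge. The other endpoints are $|N(J_1)|=|N(J_2)|$ distinct vertices of $N(J_2)$ lying in $L_1$. Hence $N(J_2)\subseteq L_1$, and since $|L_1|=|L_2|$, we get $L_1=L_2$.
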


Throughout the remainder of the paper, $L(G)$ and $L^{c}(G)=V(G)-L(G)$ denote the sets
of \cref{larsonthm}; moreover, to simplify the notation, we define the induced graphs
\begin{align*}
	L_{G} & =G[L(G)],\\
	L_{G}^{c} & =G[L^{c}(G)].
\end{align*}

\begin{observation}
	By \cref{larsonthm}, for every graph $G$ with $L^c(G)\neq \emptyset$, it follows that $L_{G}^{c}$ is a $2$-bicritical graph.
\end{observation}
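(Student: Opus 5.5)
The statement to prove is the Observation: if $L^c(G)\neq\emptyset$, then $L_G^c$ is $2$-bicritical. The plan is to reduce this directly to \cref{lmaslmaslmlmlm}, Pulleyblank's characterization. By that theorem, a graph $H$ is $2$-bicritical exactly when $|N_H(S)|>|S|$ for every nonempty independent set $S\subseteq V(H)$. We apply it with $H=L_G^c$, which is a legitimate graph because $L^c(G)\neq\emptyset$. So it suffices to show that every nonempty independent set $I$ of $L_G^c$ satisfies $|N_{L_G^c}(I)|>|I|$.

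Item 3 of \cref{larsonthm} already gives $|N(I)|>|I|$ for such $I$. The one point needing care is which graph the neighborhood is taken in. Read literally, $N(I)$ could mean $N_G(I)$, which may contain vertices of $L(G)$; the inequality would then be weaker than what Pulleyblank's criterion requires inside $L_G^c$. The key step is therefore to show that no vertex of $L^c(G)$ has a neighbor in $L(G)\setminus N(J)$, where $J$ is a maximum critical independent set.

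This follows from item 4 of \cref{larsonthm}, which gives $L(G)=J\cup N(J)$. Every neighbor of a vertex in $J$ lies in $N(J)\subseteq L(G)$, so $E(J,L^c(G))=\emptyset$. Hence any neighbor in $L(G)$ of a vertex of $I$ lies in $N(J)$, and we get
\[
N_G(I)=N_{L_G^c}(I)\cup\bigl(N_G(I)\cap N(J)\bigr).
\]

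To rule out the contribution from $N(J)$, I would first try to confirm that Larson's statement is meant with neighborhoods in $G[V(G)-L(G)]$, since the item says ``independent set in $G[V(G)-L(G)]$''. If it is instead stated in $G$, I would argue by contradiction. Suppose some nonempty independent $I\subseteq L^c(G)$ has $|N_{L_G^c}(I)|\le|I|$. Choose $I$ maximizing $d_{L_G^c}(I)$, and consider the independent set $J\cup I$ in $G$. Its neighborhood is $N(J)\cup N_{L_G^c}(I)$, since neighbors of $I$ inside $L(G)$ already lie in $N(J)$. This gives $d_G(J\cup I)\ge d_G(J)$, so $J\cup I$ is critical and strictly larger than $J$, contradicting the maximality of $J$.

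That contradiction argument is the main obstacle, but it is short. It needs only $J\cup I$ independent, which holds because $E(J,L^c(G))=\emptyset$, and the equality of neighborhoods shown above. With $|N_{L_G^c}(I)|>|I|$ established for every nonempty independent $I$, \cref{lmaslmaslmlmlm} gives that $L_G^c$ is $2$-bicritical.
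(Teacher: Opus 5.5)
Your proof is correct and follows the paper's route: the paper simply combines item 3 of \cref{larsonthm}, read with neighborhoods taken in $L_G^c$, with \cref{lmaslmaslmlmlm}. Your fallback is also sound. Adjoining a violating $I$ to a maximum critical independent set $J$ and using $E(J,L^c(G))=\emptyset$ gives $d_G(J\cup I)=d_G(J)+d_{L_G^c}(I)$, which derives the needed inequality inside $L_G^c$ from item 4 alone; the extremal choice of $I$ is unnecessary, and the argument removes any dependence on how item 3 is read.
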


\begin{lemma}
	[\cite{larson2007note}]\label{matchinglemma}
	Let $I$ be a critical independent set of $G$. Then there exists a maximum matching
	of $G$ that matches $N(I)$ into $I$.
\end{lemma}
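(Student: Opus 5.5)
The plan is to reduce the statement to the classical deficiency version of Hall's theorem, applied to the bipartite graph formed by the edges between $N(I)$ and $I$. Throughout, set $d=d_G(I)=d(G)$.

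\textbf{Step 1 (Hall's condition for $N(I)$ into $I$).} Let $H$ be the bipartite graph with parts $N(I)$ and $I$ and edge set $E(N(I),I)$. For any $X\subseteq N(I)$, consider the independent set $I'=I-N(X)$. Its neighbourhood satisfies $N(I')\subseteq N(I)-X$: a vertex $x\in X$ adjacent to some $v\in I'$ would force $v\in N(X)$, which is impossible. Criticality of $I$ (recall $d(G)=d_I(G)$) then gives
\[
|I|-|N(I\cap N(X))|-\bigl(|N(I)|-|X|\bigr)\le |I'|-|N(I')|\le d=|I|-|N(I)|.
\]
This simplifies to $|X|\le |N(X)\cap I|$, which is exactly Hall's condition. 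Hence $H$ has a matching $M_0$ saturating $N(I)$.

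\textbf{Step 2 (extend $M_0$ to a maximum matching of $G$).} The main obstacle is that an arbitrary maximum matching of $G$ need not contain $M_0$, and a maximum matching of $G-(I\cup N(I))$ combined with $M_0$ need not be maximum in $G$. I will therefore use a counting argument through the Tutte--Berge formula. Write $\mu(G)=\tfrac12\min_S\bigl(n-\mathrm{odd}(G-S)+|S|\bigr)$, and let $M_1$ be a maximum matching of $G'=G-(I\cup N(I))$. Then $M=M_0\cup M_1$ is a matching, and
\[
|M|=|N(I)|+\mu(G').
\]
It remains to show $\mu(G)\le |N(I)|+\mu(G')$. Take a Tutte--Berge barrier $S'$ of $G'$ and set $S=S'\cup N(I)$. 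In $G-S$, each vertex of $I$ is an isolated odd component, because $N(I)\subseteq S$. The components of $G'-S'$ survive unchanged in $G-S$, since $I$ has no neighbours outside $N(I)$. Therefore
\[
\mathrm{odd}(G-S)\ge |I|+\mathrm{odd}(G'-S').
\]
Substituting into the Tutte--Berge formula gives
\[
2\mu(G)\le n-|I|-\mathrm{odd}(G'-S')+|S'|+|N(I)|.
\]
Since $n=|I|+|N(I)|+n(G')$ and $2\mu(G')=n(G')-\mathrm{odd}(G'-S')+|S'|$, the right-hand side equals $2|N(I)|+2\mu(G')$. So $\mu(G)\le |M|$.

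\textbf{Conclusion.} Equality holds, $M$ is a maximum matching of $G$, and $M$ matches $N(I)$ into $I$ via $M_0$.

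Criticality is used only in Step 1. Step 2 is the delicate point, and the barrier construction $S=S'\cup N(I)$ handles it.
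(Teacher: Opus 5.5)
Your proof is correct. The paper gives no proof of this lemma; it is quoted from Larson. Your Step 1 is the standard Hall-type argument: a violation of Hall's condition at some $X\subseteq N(I)$ would make $I-N(X)$ an independent set with larger difference than $I$.

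There are two small slips in Step 1. In the displayed inequality, $|N(I\cap N(X))|$ should be $|I\cap N(X)|$, which is what you actually use when you simplify to $|X|\le|N(X)\cap I|$. Also, you do not need $d(G)=d_I(G)$: since $I'$ is independent, $d_G(I')\le d_I(G)=d_G(I)$ holds by definition.

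Step 2 is valid, but it is not the delicate point you describe, and the Tutte--Berge barrier is unnecessary. Since $I$ is independent and all its neighbours lie in $N(I)$, every edge of $G$ meeting $I\cup N(I)$ has an endpoint in $N(I)$. So any matching of $G$ has at most $|N(I)|$ edges meeting $I\cup N(I)$, and the remaining edges form a matching of $G'=G-(I\cup N(I))$. This gives $\mu(G)\le |N(I)|+\mu(G')$ in one line, for every independent set $I$, critical or not.

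Your barrier computation proves exactly this bound, so it adds nothing. The elementary count makes it clear that the whole content of the lemma lies in Step 1, where criticality guarantees that the upper bound is attained by $M_0\cup M_1$.
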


We now recall a structural description of $\core G$ in terms of Larson's partition.
This result allows us to separate the contribution of the two sides $L_G$ and $L_G^c$.

\begin{lemma}[\cite{lmkcorecritical}\label{asdklasdookokasdklasdookokasdklasdookok}\label{asdiuj12oi3j}]
	For every graph
	\[
	\core G=\core{L_{G}^{c}}\u(\core G \ii L(G)).
	\]
	Moreover,
	\[
	\core G \ii L(G)=\I_{S\in\Omega(G)}\left(S\ii L(G)\right)\subseteq \nucleus{G}.
	\]
\end{lemma}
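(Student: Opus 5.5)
The plan is to reduce everything to one structural fact about how maximum independent sets of $G$ split across Larson's partition, and then read off both assertions of the lemma by intersecting.

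\textbf{Step 1: splitting.} For every $S\in\Omega(G)$, the set $S\ii L(G)$ is independent in $L_G$ and $S\ii L^c(G)$ is independent in $L_G^c$. Since their sizes add up to $\alpha(G)=\alpha(L_G)+\alpha(L_G^c)$ by \cref{larsonthm}(1), both must be maximum:
\[
S\ii L(G)\in\Omega(L_G),\qquad S\ii L^c(G)\in\Omega(L_G^c).
\]

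\textbf{Step 2: every maximum critical independent set is maximum in $L_G$.} Let $J\in\mcritical G$. By \cref{larsonthm}(4), $L(G)=J\u N(J)$. By \cref{matchinglemma}, some matching $M$ matches $N(J)$ into $J$. The vertices of $L(G)$ are then covered by the $|N(J)|$ edges of $M$ between $N(J)$ and $J$, together with the remaining $|J|-|N(J)|$ vertices of $J$. An independent set of $L_G$ contains at most one endpoint of each such edge, so $\alpha(L_G)\le |J|$. Since $J$ itself is independent in $L_G$, we get $J\in\Omega(L_G)$.

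\textbf{Step 3: gluing.} Let $T\in\Omega(L_G^c)$. No vertex of $T$ is adjacent to $J$, because $N(J)\subseteq L(G)$. Hence $J\u T$ is independent. Its size is $\alpha(L_G)+\alpha(L_G^c)=\alpha(G)$, so $J\u T\in\Omega(G)$. I expect this gluing step, which lets us realize every maximum independent set of $L_G^c$, and every maximum critical independent set on the $L$-side, inside a maximum independent set of $G$, to be the main point. It is where Steps 1 and 2 are needed.

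\textbf{Step 4: conclusions.} First the $L^c$-side.
\begin{itemize}
\item By Step 1, every $S\in\Omega(G)$ meets $L^c(G)$ in a member of $\Omega(L_G^c)$, so $\core{L_G^c}\subseteq \core G\ii L^c(G)$.
\item Fix some $J\in\mcritical G$. Intersecting the sets $J\u T$ over all $T\in\Omega(L_G^c)$ and restricting to $L^c(G)$ gives $\core G\ii L^c(G)\subseteq \I_T T=\core{L_G^c}$.
\end{itemize}
Thus $\core G\ii L^c(G)=\core{L_G^c}$, and the displayed decomposition of $\core G$ follows.

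The identity $\core G\ii L(G)=\I_{S\in\Omega(G)}(S\ii L(G))$ is immediate from the definition of the core.

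Finally, take any $J'\in\mcritical G$ and any $T\in\Omega(L_G^c)$. By Step 3, $J'\u T\in\Omega(G)$, and $(J'\u T)\ii L(G)=J'$. Therefore
\[
\I_{S\in\Omega(G)}(S\ii L(G))\subseteq \I_{J'\in\mcritical G} J'=\nucleus G,
\]
which completes the proof.
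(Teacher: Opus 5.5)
Your proof is correct. The paper does not prove this lemma; it imports it from \cite{lmkcorecritical}. Your argument is built from the same ingredients the paper uses in nearby results:
\begin{itemize}
\item the splitting $S\ii L(G)\in\Omega(L_G)$ for $S\in\Omega(G)$, from \cref{larsonthm}(1);
\item the fact that every $J\in\mcritical G$ lies in $\Omega(L_G)$, which is the first half of the proof of \cref{asiodkasodkaso};
\item the gluing $J\u T\in\Omega(G)$ for $T\in\Omega(L_G^c)$, as in the proofs of \cref{diademlarson} and \cref{asdokasokd}.
\end{itemize}

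The only real difference is in your Step~2. The paper gets $\alpha(L_G)\le\a J$ from the K\"onig--Egerv\'ary identity $\alpha(L_G)+\mu(L_G)=\a{L(G)}$ together with $\mu(L_G)\ge\a{N(J)}$. You instead bound $\alpha(L_G)$ directly: $L(G)=J\u N(J)$ is covered by the $\a{N(J)}$ matching edges plus the remaining vertices of $J$. This route does not need item~2 of \cref{larsonthm}.

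Your Step~4 establishes both inclusions of $\core G\ii L^c(G)=\core{L_G^c}$. The inclusion into $\nucleus G$ also holds, because each $J'\in\mcritical G$ is the trace on $L(G)$ of some $J'\u T\in\Omega(G)$.
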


\begin{theorem}[\cite{KEVINcoreker}\label{asdklasdookokasdklasdookok}\label{asdokasdpokas}]
	The equality $\ker G=D(L_{G})$ holds for every graph $G$.
\end{theorem}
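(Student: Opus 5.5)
The plan is to prove the statement in two stages. First I reduce from $G$ to the König--Egerváry graph $L_G$, by showing that $\ker G=\ker{L_G}$. Then I prove $\ker H=D(H)$ for an arbitrary König--Egerváry graph $H$ by a Gallai--Edmonds argument.

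\emph{Stage 1 (reduction).} Every critical independent set of $G$ is contained in a maximum critical independent set $J$. By \cref{larsonthm}, $J\cup N(J)=L(G)$, so every critical independent set $I$ of $G$ satisfies $I\cup N_G(I)\subseteq L(G)$. Hence $d_G(I)=d_{L_G}(I)$, and in particular $d(L_G)\ge d(G)$.

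For the reverse inequality I would argue by contradiction. Suppose some independent $I\subseteq L(G)$ has $d_{L_G}(I)>d(G)$. Take a maximum matching $M$ of $G$ that matches $N(J)$ into $J$ (\cref{matchinglemma}). Since $J$ is a maximum critical independent set, there is no $M$-augmenting structure inside $L(G)$. Using König--Egerváry-ness of $L_G$ together with the supermodularity of $d$, the set $I\cup J$ (or $I$ augmented suitably) should give an independent set of $G$ whose difference exceeds $d(G)$, which is impossible. Condition 3 of \cref{larsonthm} is exactly what keeps $L^c(G)$ from contributing.

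Once $d(L_G)=d(G)$ is established, I would show the following. Every critical set $I$ of $L_G$ has $N_G(I)=N_{L_G}(I)$, because otherwise $d_G(I)<d(G)$ would contradict minimality considerations. Conversely, every critical set of $G$ is critical in $L_G$. Together these give $\critical G=\critical{L_G}$, and therefore $\ker G=\ker{L_G}$.

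\emph{Stage 2 (König--Egerváry case).} Let $H$ be König--Egerváry. In a König--Egerváry graph the factor-critical components of $H[D(H)]$ must be singletons, since a nontrivial odd component would force $\alpha+\mu<n$. So $D(H)$ is independent and $N(D(H))=A(H)$. By \cref{ge}(1), $A(H)$ is matched into distinct singletons, so
\[
d_H(D(H))=|D(H)|-|A(H)|=\mathrm{def}(H)=\alpha(H)-\mu(H)=d(H),
\]
and $D(H)$ is critical. It remains to show $D(H)\subseteq I$ for every critical $I$.

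By \cref{matchinglemma}, pick a maximum matching $M$ that matches $N(I)$ into $I$. The number of $M$-exposed vertices of $I$ is at least $|I|-|N(I)|=d(H)=\mathrm{def}(H)$, so every $M$-exposed vertex lies in $I$.

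Now take any $v\in D(H)$. There is an even $M$-alternating path from some $M$-exposed vertex $u$ to $v$. Starting at $u\in I$, a non-matching edge leads into $N(I)$, and the matching edge from there returns into $I$, because $M$ maps $N(I)$ into $I$. By induction every even-position vertex of the path lies in $I$, so $v\in I$. Thus $D(H)\subseteq\ker H\subseteq D(H)$.

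Applying Stage 2 to $H=L_G$ gives $\ker G=\ker{L_G}=D(L_G)$. The main obstacle is Stage 1: proving that no independent set inside $L(G)$ has larger difference in $L_G$ than $d(G)$, and that critical sets of $L_G$ do not have neighbours in $L^c(G)$. I would attack this first through the uniqueness clause of Larson's decomposition and the matching $M$ from \cref{matchinglemma}.
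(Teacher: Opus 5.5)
The paper gives no proof of this statement; it is quoted from \cite{KEVINcoreker}, so your proposal has to stand on its own. Your Stage~2 is sound. In a König--Egerváry graph every vertex missed by a maximum matching lies in every maximum independent set, so $D(H)$ is independent and $d_H(D(H))=\textnormal{def}(H)=\alpha(H)-\mu(H)=d(H)$. Your alternating-path argument with a matching from \cref{matchinglemma} then correctly places $D(H)$ inside every critical independent set.

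The gap is in Stage~1. You claim that every critical independent set of $L_G$ has no neighbour in $L^c(G)$, and hence that $\critical{G}=\critical{L_G}$. This is false, so no ``minimality'' argument can prove it. The paper's own example in \cref{adsasdasd123} refutes it: a triangle $xyz$ with a pendant path $zpq$. There $\mcritical{G}=\{\{q\}\}$, $L(G)=\{p,q\}$, $L_G$ is the edge $pq$, and $d(G)=d(L_G)=0$. Yet $\{p\}$ is critical in $L_G$, while $d_G(\{p\})=1-\a{\{z,q\}}=-1$. So the step you flagged as the main obstacle cannot be overcome as stated.

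Your sketch of $d(L_G)\le d(G)$ is also not an argument: $I\u J$ need not be independent, and ``should give'' carries all the weight. Either cite \cref{asdmasdkmaskdm}, or argue directly. For $J\in\mcritical{G}$ one has $J\in\Omega(L_G)$, as in the proof of \cref{asdklasdookok}. So $J$ is critical in $L_G$ by \cref{aspijk123}, and since $N_{L_G}(J)=N_G(J)$, you get $d(L_G)=\a J-\a{N_G(J)}=d(G)$.

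The repair is that you never need equality of the two families. You only need the inclusion $\critical{G}\subseteq\critical{L_G}$, which you did prove. Combined with Stage~2, it gives $D(L_G)=\ker{L_G}\subseteq\ker{G}$.

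For the reverse inclusion, it suffices to show that the single set $D(L_G)$ is critical in $G$:
\begin{enumerate}
\item Take $J\in\mcritical{G}$. Then $J\in\critical{L_G}$, so Stage~2 gives $D(L_G)\subseteq J$.
\item By \cref{larsonthm}, $N_G(D(L_G))\subseteq N_G(J)\subseteq L(G)$.
\item Hence $d_G(D(L_G))=d_{L_G}(D(L_G))=d(L_G)=d(G)$.
\end{enumerate}
So $D(L_G)\in\critical{G}$ and $\ker{G}\subseteq D(L_G)$, which completes the proof without the false claim.
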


At this point, the main difficulty becomes apparent.
While $\core G$ and $\mcritical G$ are largely controlled by the structure of $L_G$, vertices that lie on the interface between $L_G$ and $L^c_G$ may interfere with this behavior.
To capture this phenomenon, we introduce the boundary set
\[
\partial_{L}(G)=\left\{ v\in L(G):N(v)\ii L^{c}(G)\neq\emptyset\right\}.
\]
\noindent By \cref{larsonthm}, it follows that $\partial_{L}(G)\subseteq N(I)$
for every $I\in\mcritical G.$

\begin{lemma}\label{asdklasdookok}
	If $I$ is a maximum critical independent set of $G$, then
	\begin{align*}
		\core G\ii L(G) & =M\left(N(I)-\corona G\right)\u D(L_{G}),
	\end{align*}
	\noindent for every maximum matching $M$ of $L_{G}$. 
\end{lemma}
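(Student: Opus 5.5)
The plan is to reduce everything to the König--Egerváry graph $L_G$ and to use the standard rigidity of maximum independent sets with respect to maximum matchings in König--Egerváry graphs. Fix $I\in\mcritical G$ and a maximum matching $M$ of $L_G$.

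\emph{Setup.} By \cref{larsonthm}, $L(G)=I\u N(I)$. By \cref{matchinglemma} there is a matching of $G$ saturating $N(I)$ into $I$, and all its edges lie inside $L_G$. Hence $\mu(L_G)\ge |N(I)|$, which gives $\alpha(L_G)+\mu(L_G)\ge |I|+|N(I)|=|L(G)|$. Since $L_G$ is König--Egerváry, equality holds, so $I\in\Omega(L_G)$ and $\mu(L_G)=|N(I)|$.

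\emph{Key counting fact.} For any maximum matching $M'$ of $L_G$ and any $T\in\Omega(L_G)$, we have $|T|=|L(G)|-\mu(L_G)$. The set $T$ contains at most one endpoint of each $M'$-edge, and there are exactly $|L(G)|-2\mu(L_G)$ vertices left unmatched by $M'$. Comparing counts forces $T$ to contain exactly one endpoint of every $M'$-edge together with every $M'$-unmatched vertex. I expect this to be the main tool, and the only delicate point is making sure it is applied to maximum independent sets of $L_G$ that come from $\Omega(G)$.

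\emph{Traces of $\Omega(G)$ on $L(G)$.} For $S\in\Omega(G)$ we have $|S\ii L(G)|\le\alpha(L_G)$ and $|S\ii L^c(G)|\le\alpha(L_G^c)$. Part 1 of \cref{larsonthm} then forces $S\ii L(G)\in\Omega(L_G)$. Conversely, $I$ together with any maximum independent set of $L_G^c$ is independent, because $N(I)\subseteq L(G)$, and it is maximum in $G$. So $I$ is such a trace, and by \cref{asdiuj12oi3j} we get $\core G\ii L(G)=\I_{S\in\Omega(G)}(S\ii L(G))\subseteq I$. Also, for $y\in L(G)$, we have $y\in\corona G$ if and only if $y\in S\ii L(G)$ for some $S\in\Omega(G)$.

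\emph{Inclusion $\supseteq$.}
\begin{itemize}
\item If $v\in D(L_G)$, choose a maximum matching $M'$ of $L_G$ missing $v$. By the counting fact, every trace $S\ii L(G)$ contains $v$, so $v\in\core G\ii L(G)$.
\item If $x=M(y)$ with $y\in N(I)-\corona G$, then every trace contains exactly one of $x,y$ by the counting fact. No trace contains $y$, since $y\notin\corona G$. Hence every trace contains $x$, and $x\in\core G$.
\end{itemize}

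\emph{Inclusion $\subseteq$.} Let $x\in\core G\ii L(G)\subseteq I$.
\begin{itemize}
\item If $x$ is unmatched by $M$, then $x\in D(L_G)$ by definition.
\item Otherwise $M(x)=y$. Since $M$ matches $N(I)$ into $I$ (all $\mu(L_G)=|N(I)|$ edges have one endpoint in $N(I)$), we get $y\in N(I)$. If $y\in\corona G$, some trace contains $y$ and hence misses $x$, contradicting $x\in\core G$. Therefore $y\notin\corona G$ and $x\in M(N(I)-\corona G)$.
\end{itemize}

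This gives the claimed equality for every maximum matching $M$ of $L_G$.
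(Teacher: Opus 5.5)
Your proof is correct and follows essentially the same route as the paper. Both arguments establish $\mu(L_G)=|N(I)|$ and $I\in\Omega(L_G)$, then use the K\"onig--Egerv\'ary counting fact that every maximum independent set of $L_G$ contains exactly one endpoint of each edge of a maximum matching together with all its unmatched vertices, and finally prove both inclusions through the traces $S\cap L(G)$ of sets $S\in\Omega(G)$. The only minor deviation is that you get $\core G\cap L(G)\subseteq I$ directly by showing that $I$ is itself such a trace, whereas the paper obtains it through $\nucleus G$.
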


\begin{proof}
	Let $I\in\mcritical G$ and let $M$ be a maximum matching of $L_{G}$. By \cref{larsonthm},
	$L(G)=I\u N(I)$ and $L_{G}$ is a König--Egerváry graph.
	By \cref{matchinglemma}, there exists a maximum matching of $G$ that matches $N(I)$ into $I$.
	Its restriction to $L_{G}$ is a matching of cardinality $\a{N(I)}$, and hence
	$\mu(L_{G})\ge\a{N(I)}$. Since $I$ is an independent set of $L_{G}$ and
	$\alpha(L_{G})+\mu(L_{G})=\a{L(G)}=\a I+\a{N(I)}$, we obtain
	\[
	\a I\le \alpha(L_{G})=\a{L(G)}-\mu(L_{G})\le \a I+\a{N(I)}-\a{N(I)}=\a I.
	\]
	Therefore, $I\in\Omega(L_{G})$ and $\mu(L_{G})=\a{N(I)}$.
	Consequently, every maximum matching of $L_{G}$ has cardinality $\a{N(I)}$.
	Since $I$ is independent, no edge of $M$ lies inside $I$.
	If some edge of $M$ had both endpoints in $N(I)$, then $M$ would cover at least
	$\a{N(I)}+1$ vertices of $N(I)$, which is impossible.
	Thus, every edge of $M$ joins a vertex of $N(I)$ to a vertex of $I$, and $M$ matches $N(I)$ into $I$.

	Let $v\in\core G\ii L(G)$. By \cref{asdklasdookokasdklasdookokasdklasdookok},
	we have $v\in\nucleus G$, and hence $v\in I$.
	If $v\notin D(L_{G})$, then every maximum matching of $L_{G}$ covers $v$.
	In particular, there exists $u\in N(I)$ such that $M(u)=v$.
	If $u\in\corona G$, then some set $S\in\Omega(G)$ contains $u$.
	Since $v\in\core G$, the same set $S$ also contains $v$, contradicting the fact that $uv\in E(G)$.
	Therefore, $u\notin\corona G$, and so $v\in M\left(N(I)-\corona G\right)$.
	This proves that
	\[
	\core G\ii L(G)\subseteq M\left(N(I)-\corona G\right)\u D(L_{G}).
	\]

	Now let $S\in\Omega(G)$. By \cref{larsonthm}, the set $S\ii L(G)$ belongs to $\Omega(L_{G})$.
	Since $M$ has $\a{N(I)}$ edges and its unmatched vertices lie in $I$, every set in $\Omega(L_{G})$
	contains exactly one endpoint of each edge of $M$ and every unmatched vertex of $M$.
	Therefore, if $x\in D(L_{G})$, then there exists a maximum matching $M_{x}$ of $L_{G}$ that misses $x$,
	and the previous observation applied to $M_{x}$ shows that $x\in S\ii L(G)$.
	As $S\in\Omega(G)$ was arbitrary, it follows that $D(L_{G})\subseteq\core G\ii L(G)$.

	Finally, let $v\in M\left(N(I)-\corona G\right)$, and choose $u\in N(I)-\corona G$ such that $M(u)=v$.
	For every set $S\in\Omega(G)$, the set $S\ii L(G)$ lies in $\Omega(L_{G})$ and therefore contains
	exactly one endpoint of each edge of $M$.
	Since $u\notin\corona G$, no maximum independent set of $G$ contains $u$, and hence every such set $S$
	must contain $v$.
	Therefore, $v\in\core G\ii L(G)$, and this proves the reverse inclusion.
\end{proof}

As a consequence of \cref{asdklasdookokasdklasdookokasdklasdookok}, Lemma \ref{asdklasdookok}, and \cref{asdklasdookokasdklasdookok}, we infer the following.

\begin{theorem}
	If $I$ is a maximum critical independent set of $G$, then
	\[
	\core G=\core{L_{G}^{c}}\u M\left(N(I)-\corona G\right)\u\ker G,
	\]
	\noindent for every maximum matching $M$ of $L_{G}$. 
\end{theorem}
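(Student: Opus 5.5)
The plan is to combine the three preceding results directly. By \cref{asdklasdookokasdklasdookokasdklasdookok}, for every graph $G$ we have
\[
\core G=\core{L_{G}^{c}}\u\left(\core G\ii L(G)\right),
\]
so it suffices to identify the second term. I would fix $I\in\mcritical G$ and an arbitrary maximum matching $M$ of $L_G$. Then Lemma \ref{asdklasdookok} rewrites this term as
\[
\core G\ii L(G)=M\left(N(I)-\corona G\right)\u D(L_{G}).
\]

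Next, \cref{asdokasdpokas} gives $D(L_G)=\ker G$, and we substitute this in. The result is
\[
\core G=\core{L_{G}^{c}}\u M\left(N(I)-\corona G\right)\u\ker G,
\]
which is exactly the claimed identity.

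This is a chain of equalities, and each step holds for every choice of $I$ and $M$. The only points to verify are hypothesis compatibility. Lemma \ref{asdklasdookok} is stated for an arbitrary $I\in\mcritical G$ and an arbitrary maximum matching $M$ of $L_G$, which matches the quantifiers in the theorem. \cref{asdokasdpokas} and \cref{asdklasdookokasdklasdookokasdklasdookok} hold unconditionally. The one possible subtlety is the degenerate case $L(G)=\emptyset$. There $I=\emptyset$ and $M=\emptyset$, and all three sets on the right other than $\core{L_G^c}=\core G$ are empty, so the identity still holds.
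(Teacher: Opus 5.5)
Your proposal is correct and follows the paper's route exactly. You substitute Lemma~\ref{asdklasdookok} into the decomposition $\core G=\core{L_G^c}\u(\core G\ii L(G))$, then replace $D(L_G)$ by $\ker G$ via \cref{asdokasdpokas}; your separate check of the degenerate case $L(G)=\emptyset$ is harmless but unnecessary, since the cited results already cover it.
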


\begin{theorem}[\label{aspijk123}\cite{levit2012critical}]
	$G$ is a König--Egerváry graph if and only if each of its maximum
	independent sets is critical.
\end{theorem}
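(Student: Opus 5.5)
The statement to prove is \cref{aspijk123}: $G$ is König--Egerváry if and only if every maximum independent set of $G$ is critical. Both directions go through the single inequality $d_G(S)\ge \a S-\mu(G)$, valid for every independent set $S$, combined with the fact that $d(G)=d_I(G)$ can be computed as a maximum over independent sets.

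\emph{Forward direction.} Suppose $\alpha(G)+\mu(G)=n(G)$ and let $S\in\Omega(G)$.
\begin{itemize}
\item Every edge of a maximum matching $M$ has at most one endpoint in $S$.
\item Hence $\mu(G)\le \a{V(G)-S}=n(G)-\alpha(G)=\mu(G)$, so equality holds.
\item Equality forces every vertex of $V(G)-S$ to be matched by $M$ into $S$.
\item Since $S$ is maximum independent, every vertex outside $S$ has a neighbour in $S$, so $N(S)=V(G)-S$.
\item Therefore $d_G(S)=\alpha(G)-\mu(G)$.
\end{itemize}
It remains to show that no independent set $J$ does better. Fix a maximum matching $M$. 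The vertices of $J$ that are $M$-saturated are matched injectively into $N(J)$. At most $n(G)-2\mu(G)$ vertices of $J$ are unsaturated. This gives
\[
\a J-\a{N(J)}\le n(G)-2\mu(G)=\alpha(G)-\mu(G).
\]
So $S$ attains $d_I(G)$, and $S$ is critical.

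\emph{Converse.} Assume every $S\in\Omega(G)$ is critical, and fix one such $S$.
\begin{itemize}
\item By \cref{matchinglemma} there is a maximum matching $M$ of $G$ that matches $N(S)$ into $S$.
\item Maximality of $S$ gives $N(S)=V(G)-S$.
\item So $M$ contains at least $\a{V(G)-S}=n(G)-\alpha(G)$ edges, i.e. $\mu(G)\ge n(G)-\alpha(G)$.
\item The opposite inequality $\mu(G)\le n(G)-\alpha(G)$ holds in every graph, since each edge of a matching uses a vertex outside $S$.
\end{itemize}
Hence $\alpha(G)+\mu(G)=n(G)$, and $G$ is König--Egerváry.

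The converse is routine once \cref{matchinglemma} is invoked; this proof uses only one maximum independent set being critical, so the hypothesis is stronger than needed. The main obstacle is the forward direction, where one must check that the global bound $d_I(G)\le \alpha(G)-\mu(G)$, obtained by counting unsaturated vertices, is attained exactly by every maximum independent set.
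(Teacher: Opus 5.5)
The paper does not prove this statement. It is quoted from \cite{levit2012critical} and used as a black box, so there is no in-paper argument to compare against. Your proof is correct and essentially self-contained.

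In the forward direction, the key bound $|J|-|N(J)|\le n(G)-2\mu(G)$ comes from sending each $M$-saturated vertex $x\in J$ to $M(x)\in N(J)$. This injection never uses the independence of $J$, so the same bound holds for every $X\subseteq V(G)$. That gives $d(G)\le n(G)-2\mu(G)$ directly, and you do not actually need Zhang's identity $d(G)=d_I(G)$. Also, the step showing that $M$ matches $V(G)-S$ into $S$ is superfluous. All you use is $N(S)=V(G)-S$, which already follows from the maximality of $S$.

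The converse is the natural argument via \cref{matchinglemma}. As you observe, it needs only one critical maximum independent set. Combined with the forward direction, you have proved that $G$ is K\"onig--Egerv\'ary iff some maximum independent set is critical iff all of them are. The middle condition is equivalent to Larson's characterization that the critical independence number equals $\alpha(G)$ \cite{larson2011critical}.
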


From \cref{aspijk123} the following is obtained directly.

\begin{theorem}\label{asdijasodijasodij}
	If $G$ is a König--Egerváry graph, then 
	\[
	\mcritical G=\Omega(G),
	\]
	\noindent that is, $\nucleus G=\core G$. 
\end{theorem}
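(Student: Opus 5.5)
The plan is to prove $\mcritical G=\Omega(G)$ for a König--Egerváry graph $G$ by showing both inclusions. The equality $\nucleus G=\core G$ then follows at once, since both sets are the intersection of the same family.

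First, I show $\Omega(G)\subseteq\critical G$, which is exactly \cref{aspijk123}. Next I show that a critical independent set cannot be larger than $\alpha(G)$. Trivially every independent set has at most $\alpha(G)$ elements. Since some maximum independent set is critical, the maximum cardinality of a critical independent set equals $\alpha(G)$. Hence every $S\in\Omega(G)$ is a critical independent set of maximum possible size, so $S\in\mcritical G$.

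Conversely, let $J\in\mcritical G$. Its size is the largest cardinality among critical independent sets, and this was just shown to be $\alpha(G)$. So $|J|=\alpha(G)$ and $J\in\Omega(G)$, which gives $\mcritical G\subseteq\Omega(G)$.

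I expect no real obstacle here. The only point needing care is that "maximum critical" means maximum cardinality among critical independent sets, not among all independent sets. That is why the first inclusion is needed to pin the maximum critical cardinality at $\alpha(G)$. As an alternative check, one could use \cref{larsonthm}(4). If $J\in\mcritical G$, then $L(G)=J\cup N(J)$, and for König--Egerváry $G$ one expects $L(G)=V(G)$. I would not rely on that route, since the direct argument above suffices.
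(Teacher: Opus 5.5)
Your proof is correct and takes the paper's approach: the paper obtains this result directly from \cref{aspijk123}. You have written out the cardinality argument that makes ``directly'' precise, namely that since every maximum independent set is critical, the maximum size of a critical independent set is exactly $\alpha(G)$.
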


\begin{theorem}[\label{asdmasdkmaskdm}\cite{lmkcorecritical}]
	 The equality $d(G)=d(L_{G})$ holds for every graph $G$.
\end{theorem}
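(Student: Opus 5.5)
The claim is that $d(G)=d(L_G)$ for every graph $G$. I will prove it by establishing the two inequalities $d(L_G)\ge d(G)$ and $d(G)\ge d(L_G)$ separately. Throughout I use the fact, recalled in the introduction, that $d(H)=d_I(H)$ for every graph $H$. This lets me work only with independent sets in both graphs.

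First inequality, $d(L_G)\ge d(G)$. Fix a maximum critical independent set $I$ of $G$. By \cref{larsonthm}, $L(G)=I\u N_G(I)$, so $N_G(I)\s L(G)$ and hence $N_{L_G}(I)=N_G(I)$. Moreover $I$ is independent in $L_G$. Therefore
\[
d(L_G)\ge d_{L_G}(I)=\a I-\a{N_G(I)}=d_G(I)=d(G).
\]

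Second inequality, $d(G)\ge d(L_G)$. Let $J$ be a critical independent set of $L_G$. In $G$ we have
\[
N_G(J)=N_{L_G}(J)\u\bigl(N_G(J)\ii L^c(G)\bigr),
\]
so extra neighbours in $L^c(G)$ may appear. To absorb them, I would use that $L_G$ is König--Egerváry. By the proof of Lemma \ref{asdklasdookok}, $\alpha(L_G)=\a I$ and $\mu(L_G)=\a{N_G(I)}$, so
\[
d(L_G)\ge \a I-\a{N(I)}=\alpha(L_G)-\mu(L_G).
\]
Conversely, for any independent set $J$ of $L_G$, a maximum matching of $L_G$ matches at most $\a{N_{L_G}(J)}$ vertices of $J$, and each of the other vertices of $J$ is either unmatched or matched to a vertex outside $J\u N_{L_G}(J)$. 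Counting then gives $\a J-\a{N_{L_G}(J)}\le \a{L(G)}-2\mu(L_G)=\alpha(L_G)-\mu(L_G)$, using the König--Egerváry equality $\alpha(L_G)+\mu(L_G)=\a{L(G)}$. Hence
\[
d(L_G)=\alpha(L_G)-\mu(L_G)=\a I-\a{N_G(I)}=d_G(I)=d(G).
\]

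The step I expect to need the most care is the counting bound $\a J-\a{N_{L_G}(J)}\le\a{L(G)}-2\mu(L_G)$. I would verify it by partitioning $L(G)$ into $J$, $N_{L_G}(J)$ and the rest. Each edge of a maximum matching $M$ of $L_G$ either meets $N_{L_G}(J)$ or lies inside $L(G)-J-N_{L_G}(J)$, or joins $J$ to nothing, which is impossible since $J$ is independent and any neighbour of $J$ lies in $N_{L_G}(J)$. So the number of $M$-edges is at most $\a{N_{L_G}(J)}+\tfrac12\bigl(\a{L(G)}-\a J-\a{N_{L_G}(J)}\bigr)$. Rearranging this gives the claimed bound.
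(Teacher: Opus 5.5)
Your argument is correct and complete. The paper gives no proof of this statement; it imports it from \cite{lmkcorecritical}. Its own toolkit does, however, suggest a shorter route than yours. The first paragraph of the proof of \cref{asdklasdookok} gives $I\in\Omega(L_G)$ for $I\in\mcritical G$. Then \cref{asdijasodijasodij} (i.e.\ \cref{aspijk123}) says every maximum independent set of the König--Egerváry graph $L_G$ is critical, so $d(L_G)=d_{L_G}(I)=|I|-|N_G(I)|=d(G)$ immediately. This is exactly how the paper treats the analogous step in the proof of \cref{asiodkasodkaso}.

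Your counting bound $|J|-|N_{L_G}(J)|\le|L(G)|-2\mu(L_G)$, combined with $\alpha(L_G)+\mu(L_G)=|L(G)|$, is an elementary proof of precisely the instance of \cref{aspijk123} that is needed. What your version buys:
\begin{itemize}
\item It is self-contained, using only \cref{larsonthm}, \cref{matchinglemma} and $d=d_I$.
\item It is visibly non-circular: the part of the proof of \cref{asdklasdookok} you quote does not depend on the statement, whereas \cref{asiodkasodkaso} does, so you were right not to cite the latter.
\item It yields the explicit value $d(G)=\alpha(L_G)-\mu(L_G)=\textnormal{def}(L_G)$ as a by-product.
\end{itemize}
The citation route is shorter but rests on an external characterization.

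Two cosmetic points. First, the opening remark about a critical independent set of $L_G$ picking up extra neighbours in $L^c(G)$ is a false start and can be dropped. Second, in the sentence about unmatched vertices, the alternative ``matched to a vertex outside $J\cup N_{L_G}(J)$'' cannot occur, because a covered vertex of the independent set $J$ is matched into $N_{L_G}(J)$. The clean form is: at least $|J|-|N_{L_G}(J)|$ vertices of $J$ are exposed by $M$, while exactly $|L(G)|-2\mu(L_G)$ vertices are exposed in total.
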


Having described $\core G$, we now turn to the family of maximum
critical independent sets.

\begin{theorem}\label{asiodkasodkaso}
	The equality
	\[
	\mcritical G=\left\{ S\in\Omega\left(L_{G}\right):E(S,L^{c}(G))=\emptyset\right\} 
	\] is valid for every graph $G$.
\end{theorem}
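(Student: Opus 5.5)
We prove the two inclusions separately, using \cref{larsonthm} and the computation already carried out in the proof of Lemma \ref{asdklasdookok}.

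\emph{The inclusion $\subseteq$.} Let $J\in\mcritical G$. By \cref{larsonthm}, $L(G)=J\u N(J)$. Hence $J\subseteq L(G)$ and $N(J)\subseteq L(G)$, so no vertex of $J$ has a neighbour in $L^{c}(G)$, that is, $E(J,L^{c}(G))=\emptyset$. The first paragraph of the proof of Lemma \ref{asdklasdookok} shows that every maximum critical independent set $I$ of $G$ lies in $\Omega(L_{G})$; this used only \cref{matchinglemma} and the König--Egerváry property of $L_{G}$. Applying that argument to $J$ gives $J\in\Omega(L_{G})$, which proves the first inclusion.

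\emph{The inclusion $\supseteq$.} Let $S\in\Omega(L_{G})$ with $E(S,L^{c}(G))=\emptyset$. Then $N_{G}(S)=N_{L_{G}}(S)$, so $d_{G}(S)=d_{L_{G}}(S)$. Since $L_{G}$ is König--Egerváry, \cref{aspijk123} shows that $S$ is a critical independent set of $L_{G}$, so $d_{L_{G}}(S)=d_{I}(L_{G})=d(L_{G})$. By \cref{asdmasdkmaskdm} and the equality $d=d_{I}$, we get $d(L_{G})=d(G)=d_{I}(G)$. Therefore $d_{G}(S)=d_{I}(G)$, and $S\in\critical G$.

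It remains to show that $S$ has maximum cardinality among critical independent sets of $G$. Fix some $I\in\mcritical G$. By the first inclusion, $I\in\Omega(L_{G})$, so $\a S=\alpha(L_{G})=\a I$. Hence $S$ is a critical independent set of maximum size, i.e.\ $S\in\mcritical G$.

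\emph{Main obstacle.} The only delicate point is justifying $I\in\Omega(L_{G})$ for an arbitrary maximum critical independent set. I would isolate that paragraph of the proof of Lemma \ref{asdklasdookok} as a separate claim. Its key step is that the restriction to $L_{G}$ of a maximum matching of $G$ matching $N(I)$ into $I$ has exactly $\a{N(I)}$ edges, all inside $L(G)$; combined with $\alpha(L_{G})+\mu(L_{G})=\a{L(G)}$, this gives $\alpha(L_{G})=\a I$. Everything else is a direct bookkeeping of differences.
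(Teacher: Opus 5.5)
Your proof is correct and follows essentially the same route as the paper. The inclusion $\subseteq$ comes from $L(G)=J\cup N(J)$ together with the matching and König--Egerváry count from the proof of Lemma~\ref{asdklasdookok}; the inclusion $\supseteq$ comes from $N_{L_G}(S)=N_G(S)$, criticality of maximum independent sets in $L_G$, the identity $d(G)=d(L_G)$, and a cardinality comparison with a fixed maximum critical independent set (you cite \cref{aspijk123} directly where the paper cites its consequence \cref{asdijasodijasodij}).
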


\begin{proof}
	Let $I\in\mcritical G$. By \cref{larsonthm}, we have $L(G)=I\u N(I)$ and $L_{G}$ is a König--Egerváry graph.
	By \cref{matchinglemma}, there exists a maximum matching of $G$ that matches $N(I)$ into $I$.
	As in the proof of \cref{asdklasdookok}, this implies that $\mu(L_{G})\ge \a{N(I)}$.
	Since $I$ is independent in $L_{G}$ and $\alpha(L_{G})+\mu(L_{G})=\a{L(G)}=\a I+\a{N(I)}$,
	we conclude that $I\in\Omega(L_{G})$. Moreover, note that $E(I,L^{c}(G))=\emptyset$.

	Conversely, let $I\in\Omega\left(L_{G}\right)$ such that $E(I,L^{c}(G))=\emptyset$.
	By \cref{asdijasodijasodij} we have $I\in\mcritical{L_{G}}$, and therefore
	$\a I-\a{N_{L_{G}}(I)}=d(L_{G})$.
	Note that $N_{L_{G}}(I)=N_{G}(I)$, since $E(I,L^{c}(G))=\emptyset$.
	In addition, by \cref{asdmasdkmaskdm}, we have $d(G)=d(L_{G})$.
	Therefore, $\a I-\a{N_{G}(I)}=d(G)$, and hence $I$ is a critical independent set of $G$.
	If $J\in\mcritical G$, then the first part of the proof shows that $J\in\Omega(L_{G})$.
	Thus $\a J=\alpha(L_{G})=\a I$, so $I$ is a maximum critical independent set of $G$.
\end{proof}

\begin{corollary}\label{diademlarson}
	For every graph $G$,
	\[
	\nucleus G=\I\left\{ S\in\Omega\left(L_{G}\right):E(S,L^{c}(G))=\emptyset\right\}
	\]
	and
	\[
	\diadem G=\U\left\{ S\in\Omega\left(L_{G}\right):E(S,L^{c}(G))=\emptyset\right\}\subseteq \corona G\ii L(G).
	\]
\end{corollary}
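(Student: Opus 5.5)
The corollary follows almost directly from \cref{asiodkasodkaso}. For the nucleus I will simply intersect both sides of the set identity of \cref{asiodkasodkaso}. By definition $\nucleus G=\I\mcritical G$, and \cref{asiodkasodkaso} identifies $\mcritical G$ with the family $\{S\in\Omega(L_G):E(S,L^c(G))=\emptyset\}$. So the first displayed formula is immediate; the family is nonempty because maximum critical independent sets always exist.

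For the diadem, the introduction recorded that $\diadem G=\U\critical G=\U\mcritical G$. This holds because every critical independent set is contained in a maximum critical one \cite{larson2007note}. Taking unions in \cref{asiodkasodkaso} then gives the stated equality for $\diadem G$.

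The only part needing an argument is the inclusion into $\corona G\ii L(G)$. Each $S$ in the family lies in $L(G)$, so the union lies in $L(G)$; what remains is to show that each such $S$ lies in some maximum independent set of $G$. Take $S\in\Omega(L_G)$ with $E(S,L^c(G))=\emptyset$, and choose $T\in\Omega(L_G^c)$. Then $S\u T$ is independent in $G$, since $S$ and $T$ are each independent and no edge joins $S$ to $L^c(G)$. Its size is $\alpha(L_G)+\alpha(L_G^c)=\alpha(G)$ by part 1 of \cref{larsonthm}. Hence $S\u T\in\Omega(G)$, so $S\s\corona G$. If $L^c(G)=\emptyset$, we take $T=\emptyset$ and the same argument applies.

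None of these steps is a real obstacle. The one point to watch is confirming that $E(S,L^c(G))=\emptyset$ is exactly what makes $S\u T$ independent, and that the additivity of $\alpha$ from \cref{larsonthm} is being used with the right sets.
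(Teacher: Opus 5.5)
Your proposal is correct and follows the paper's proof. The two equalities come from intersecting and taking unions over the family that \cref{asiodkasodkaso} identifies with $\mcritical G$ (using $\diadem G=\U\mcritical G$), and the inclusion in $\corona G\ii L(G)$ comes from extending each such $S$ by some $R\in\Omega(L_G^c)$ to an independent set of size $\alpha(L_G)+\alpha(L_G^c)=\alpha(G)$; your remarks on nonemptiness of the family and on the case $L^c(G)=\emptyset$ are extra care that the paper leaves implicit.
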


\begin{proof}
	The first equality is an immediate reformulation of \cref{asiodkasodkaso}. The second follows from the definition of $\diadem G$ and the same characterization of $\mcritical G$. If $I\in\mcritical G$, then by \cref{asiodkasodkaso} we have $I\in\Omega(L_G)$ and $E(I,L^{c}(G))=\emptyset$. Let $R\in\Omega(L_{G}^{c})$. Since $I\u R$ is independent and, by \cref{larsonthm},
	\[
	\a(I\u R)=\alpha(L_G)+\alpha(L_G^c)=\alpha(G),
	\]
	it follows that $I\u R\in\Omega(G)$. Hence $I\subseteq \corona G\ii L(G)$, and taking unions over all sets $I\in\mcritical G$ yields the desired inclusion.
\end{proof}

\begin{theorem}\label{asodpkaspdokapo}
	If $I$ is a maximum critical independent set of $G$, then
	\begin{align*}
		M\left(N(I)-\left(\corona G-\partial_{L}(G)\right)\right)\u\ker G\subseteq\nucleus G
	\end{align*}
	\noindent holds for every maximum matching $M$ of $L_{G}$. 
\end{theorem}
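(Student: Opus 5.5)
The argument rests on \cref{asiodkasodkaso}: every $J\in\mcritical G$ lies in $\Omega(L_G)$ and satisfies $E(J,L^c(G))=\emptyset$. The membership $J\in\Omega(L_G)$ gives the same rigid matching structure used in the proof of \cref{asdklasdookok}. Fix $I\in\mcritical G$ and a maximum matching $M$ of $L_G$. As shown there, $\mu(L_G)=\a{N(I)}$, $M$ matches $N(I)$ into $I$, and the unmatched vertices lie in $I$. Since $\alpha(L_G)+\mu(L_G)=\a{L(G)}$, every set in $\Omega(L_G)$ contains exactly one endpoint of each edge of $M$, together with every vertex left unmatched by $M$. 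The same holds for any other maximum matching of $L_G$. We will apply this to an arbitrary $J\in\mcritical G$.

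\emph{Step 1: $\ker G\subseteq\nucleus G$.} By \cref{asdokasdpokas}, $\ker G=D(L_G)$. For $x\in D(L_G)$, pick a maximum matching $M_x$ of $L_G$ missing $x$. Every $J\in\Omega(L_G)$ must contain all vertices unmatched by $M_x$, so $x\in J$. In particular $x$ lies in every $J\in\mcritical G$, hence $x\in\nucleus G$. Alternatively, this inclusion follows from the definitions: $\nucleus G$ is an intersection over a subfamily of $\critical G$, whose intersection is $\ker G$.

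\emph{Step 2: the matched part.} Let $u\in N(I)-(\corona G-\partial_L(G))$ and $v=M(u)\in I$. Either $u\notin\corona G$, or $u\in\partial_L(G)$; we show that in both cases $u$ lies in no $J\in\mcritical G$. Once this is known, $J\in\Omega(L_G)$ contains exactly one endpoint of the edge $uv\in M$, so $v\in J$. As $J$ is arbitrary, $v\in\nucleus G$.
\begin{itemize}
\item If $u\in\partial_L(G)$, then $u$ has a neighbour in $L^c(G)$, so $u\notin J$ because $E(J,L^c(G))=\emptyset$.
\item If $u\notin\corona G$, then $u\notin J$ by \cref{diademlarson}, since $J\subseteq\diadem G\subseteq\corona G\ii L(G)$.
\end{itemize}

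I expect the only delicate point to be the claim that each $J\in\Omega(L_G)$ meets every edge of $M$ exactly once. This follows from the König--Egerváry count $\a J=\a{L(G)}-\mu(L_G)$. Independence gives at most one endpoint per edge, and the count then forces exactly one endpoint per edge plus all unmatched vertices. This has already been established in the proof of \cref{asdklasdookok}, so I would simply invoke it there.
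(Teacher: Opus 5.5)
Your proof is correct and essentially the paper's argument. You use the same one-endpoint-per-edge property of maximum matchings of $L_G$, handle the case $u\in\partial_L(G)$ the same way via $E(J,L^c(G))=\emptyset$, and get the same inclusion $\ker G\subseteq\nucleus G$. The only variation is the case $u\notin\corona G$: the paper invokes \cref{asdklasdookok} to get $v\in\core G\ii L(G)\subseteq\nucleus G$, whereas you use $\diadem G\subseteq\corona G\ii L(G)$ from \cref{diademlarson} to show that $u$ lies in no maximum critical independent set. This lets you treat both cases with a single matching argument.
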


\begin{proof}
	Let $I\in\mcritical G$. By \cref{larsonthm}, we know that $L(G)=I\u N(I)$.
	Let $M$ be a maximum matching of $L_{G}$. As in the proof of \cref{asdklasdookok},
	$M$ matches $N(I)$ into $I$, and every maximum independent set of $L_{G}$ contains
	exactly one endpoint of each edge of $M$.
	By \cref{asiodkasodkaso},
	\[
	\nucleus G = \I\left\{ S\in\Omega\left(L_{G}\right):E(S,L^{c}(G))=\emptyset\right\}.
	\]
		Let $v\in M\left(N(I)-\left(\corona G-\partial_{L}(G)\right)\right)$.
		Choose $u\in N(I)-\left(\corona G-\partial_{L}(G)\right)$ such that $M(u)=v$.

	If $u\notin\corona G$, then \cref{asdklasdookok} yields
	$v\in\core G\ii L(G)\subseteq\nucleus G$.
	Suppose now that $u\in\partial_{L}(G)$.
	Let $S\in\Omega\left(L_{G}\right)$ satisfy $E(S,L^{c}(G))=\emptyset$.
	Since $u$ has a neighbor in $L^{c}(G)$, we must have $u\notin S$.
	Because $S$ contains exactly one endpoint of each edge of $M$, it follows that $v=M(u)\in S$.
	As $S$ was arbitrary, we conclude that $v\in\nucleus G$.

	Finally, since $\ker G\subseteq\nucleus G$, we obtain
	\[
	M\left(N(I)-\left(\corona G-\partial_{L}(G)\right)\right)\u\ker G\subseteq\nucleus G,
	\]
	which completes the proof.
\end{proof}

\begin{theorem}
	If $G$ is a König--Egerváry graph, then
	\[
	\nucleus G=M\left(N(I)-\corona G\right)\u\ker G.
	\]
\end{theorem}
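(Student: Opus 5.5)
If $G$ is König--Egerváry, I would argue that $L(G)=V(G)$, so the statement collapses to the preceding theorem that describes $\core G\ii L(G)$. Throughout, $I$ is a maximum critical independent set of $G$ and $M$ is a maximum matching of $L_G$, as in \cref{asdklasdookok}.

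First, I would show $L(G)=V(G)$. By \cref{asdijasodijasodij}, $\mcritical G=\Omega(G)$, so $I\in\Omega(G)$. By \cref{aspijk123}, $I$ is critical, so $\a{I}-\a{N(I)}=d(G)$. Let $M'$ be a maximum matching of $G$ supplied by \cref{matchinglemma}, matching $N(I)$ into $I$. The König--Egerváry identity gives $\alpha(G)+\mu(G)=\a{V(G)}$, hence $\mu(G)=\a{V(G)}-\a I$. The vertices of $V(G)-(I\u N(I))$ have no neighbours in $I$, since they lie outside $N(I)$. So every edge of $M'$ is either an $N(I)$--$I$ edge or lies inside $V(G)-I$. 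If some vertex $w$ lay outside $I\u N(I)$, it would have to be matched inside $V(G)-I$ by edges not touching $N(I)$, because $N(I)$ is already used. Counting, $\mu(G)=\a{N(I)}+\mu(G-(I\u N(I)))$, and this must equal $\a{N(I)}+\a{V(G)-(I\u N(I))}$. That is impossible unless $V(G)-(I\u N(I))$ is empty, since a matching cannot cover more vertices than it has edges times two, yet here it would need as many edges as vertices. Therefore $V(G)=I\u N(I)=L(G)$ by \cref{larsonthm}, and $L^c(G)=\emptyset$.

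Next, with $L(G)=V(G)$ we have $L_G=G$ and $\core{L_G^c}=\core{\emptyset}=\emptyset$. By \cref{asdijasodijasodij}, $\nucleus G=\core G$. The theorem following \cref{asdklasdookok} then gives
\[
\core G=\core{L_G^c}\u M\left(N(I)-\corona G\right)\u\ker G=M\left(N(I)-\corona G\right)\u\ker G,
\]
which is the claim. This holds for every $I\in\mcritical G$ and every maximum matching $M$ of $G=L_G$.

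The only real obstacle is justifying $L(G)=V(G)$ cleanly; the counting above is the intended route. An alternative I would keep in mind is to invoke \cref{asiodkasodkaso} directly. It shows $\mcritical G\subseteq\Omega(L_G)$, which forces $\alpha(G)=\alpha(L_G)$, and then \cref{larsonthm}(1) gives $\alpha(L_G^c)=0$, so $L^c(G)=\emptyset$. I would probably present this second argument, as it is shorter and avoids the matching count.
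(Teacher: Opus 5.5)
Your proof is correct and follows essentially the paper's route. You show $L_G=G$, use \cref{asdijasodijasodij} to get $\nucleus G=\core G$, and then apply the description of $\core G$ (the paper cites \cref{asdklasdookok} and \cref{asdokasdpokas} directly, which is equivalent to the combined theorem you use). The one difference is that the paper obtains $L(G)=V(G)$ in a single line: since $I\in\Omega(G)$ is a maximal independent set, every vertex outside $I$ has a neighbour in $I$, so $I\cup N(I)=V(G)$. This makes your matching count and your $\alpha$-based alternative unnecessary, although both are valid.
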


\begin{proof}
	Let $I\in\mcritical G$. Since $G$ is a König--Egerváry graph,
	\cref{asdijasodijasodij} implies that $I\in\Omega(G)$.
	Hence $I\u N(I)=V(G)$, and by \cref{larsonthm} we obtain $L(G)=V(G)$, that is, $L_{G}=G$.
	Again by \cref{asdijasodijasodij}, we have $\nucleus G=\core G$.
		Therefore, \cref{asdklasdookok,asdokasdpokas} yield
		\[
		\begin{aligned}
		\nucleus G
		&=\core G\\
		&=M\left(N(I)-\corona G\right)\u D(L_{G})\\
		&=M\left(N(I)-\corona G\right)\u\ker G.
		\end{aligned}
		\]
\end{proof}

We now arrive at the key point of the argument.
The following theorem shows that, once the boundary obstruction disappears,
both the family of maximum critical independent sets and the corresponding invariants are completely determined by the
König--Egerváry part $L_G$.

\begin{theorem}\label{asdokasokd}
	Let $G$ be a graph such that $\corona G\ii\partial_{L}(G)=\emptyset$.
	Then
	\[
	\mcritical G=\left\{ T\ii L(G):T\in\Omega(G)\right\}.
	\]
	Consequently,
	\[
	\nucleus G=\core G\ii L(G)
	\qquad\textnormal{and}\qquad
	\diadem G=\corona G\ii L(G).
	\]
\end{theorem}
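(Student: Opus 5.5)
We will prove the set equality by double inclusion, using the characterization of \cref{asiodkasodkaso}:
\[
\mcritical G=\left\{ S\in\Omega\left(L_{G}\right):E(S,L^{c}(G))=\emptyset\right\}.
\]
Both consequences then follow by taking intersections and unions. The intersection of the sets $T\ii L(G)$ with $T\in\Omega(G)$ is $\core G\ii L(G)$, and their union is $\corona G\ii L(G)$. Combined with $\nucleus G=\I\mcritical G$ and $\diadem G=\U\mcritical G$, this gives the two displayed identities.

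\emph{Inclusion $\supseteq$.} Let $T\in\Omega(G)$. By part~1 of \cref{larsonthm}, $\a{T\ii L(G)}\le\alpha(L_G)$ and $\a{T\ii L^c(G)}\le\alpha(L_G^c)$, while their sum equals $\alpha(G)=\alpha(L_G)+\alpha(L_G^c)$. Hence $T\ii L(G)\in\Omega(L_G)$. (This fact is already used in the proof of \cref{asdklasdookok}.)

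It remains to check that $E(T\ii L(G),L^c(G))=\emptyset$. Any vertex of $L(G)$ with a neighbor in $L^c(G)$ lies in $\partial_L(G)$ by definition. A vertex of $T$ lies in $\corona G$. The hypothesis $\corona G\ii\partial_L(G)=\emptyset$ therefore says that no vertex of $T$ lies in $\partial_L(G)$. So $T\ii L(G)$ has no edges to $L^c(G)$, and by \cref{asiodkasodkaso} it belongs to $\mcritical G$.

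\emph{Inclusion $\subseteq$.} Let $I\in\mcritical G$. As in the proof of \cref{diademlarson}, choose $R\in\Omega(L_G^c)$. Then $I\u R$ is independent, because $E(I,L^c(G))=\emptyset$. Its size is $\alpha(L_G)+\alpha(L_G^c)=\alpha(G)$, so $T:=I\u R\in\Omega(G)$ and $T\ii L(G)=I$. This direction does not even need the hypothesis.

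\emph{Consequences.} Taking intersections gives
\[
\nucleus G=\I_{T\in\Omega(G)}(T\ii L(G))=\core G\ii L(G).
\]
Taking unions gives
\[
\diadem G=\U_{T\in\Omega(G)}(T\ii L(G))=\corona G\ii L(G).
\]
There is no real obstacle. The only point needing care is the first inclusion, where the hypothesis on $\corona G\ii\partial_L(G)$ is exactly what guarantees that the restrictions of maximum independent sets avoid the boundary.
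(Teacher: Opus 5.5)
Your proposal is correct and follows the paper's proof essentially step for step. Both use the characterization of $\mcritical G$ from \cref{asiodkasodkaso}, extend a set by some $R\in\Omega(L_G^c)$ for one inclusion, use the hypothesis $\corona G\ii\partial_L(G)=\emptyset$ to show that each trace $T\ii L(G)$ has no edges to $L^c(G)$ for the other, and then take intersections and unions; your explicit derivation of $T\ii L(G)\in\Omega(L_G)$ from part~1 of \cref{larsonthm}, and your remark that one inclusion needs no hypothesis, are small but welcome clarifications.
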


\begin{proof}
	By \cref{asiodkasodkaso},
	\[
	\mcritical G=\left\{ S\in\Omega\left(L_{G}\right):E(S,L^{c}(G))=\emptyset\right\}.
	\]
	We claim that
	\[
	\left\{ S\in\Omega\left(L_{G}\right):E(S,L^{c}(G))=\emptyset\right\}
	=
	\left\{ T\ii L(G):T\in\Omega(G)\right\}.
	\]
	Let $S\in\Omega\left(L_{G}\right)$ satisfy $E(S,L^{c}(G))=\emptyset$.
	If $R\in\Omega\left(L_{G}^{c}\right)$, then $S\u R$ is an independent set of cardinality
	$\alpha(L_{G})+\alpha(L_{G}^{c})=\alpha(G)$, and hence $S\u R\in\Omega(G)$.
	Thus $S=(S\u R)\ii L(G)$.
	Conversely, let $T\in\Omega(G)$.
	By \cref{larsonthm}, the set $T\ii L(G)$ belongs to $\Omega(L_{G})$.
	Moreover, $T\ii L(G)\subseteq\corona G\ii L(G)$.
	Since $\corona G\ii\partial_{L}(G)=\emptyset$, no vertex of $T\ii L(G)$ is adjacent to $L^{c}(G)$,
	and therefore $E(T\ii L(G),L^{c}(G))=\emptyset$.
	This proves the claim. Taking intersections and unions over the two equal families yields
	\[
	\nucleus G=\core G\ii L(G)
	\qquad\textnormal{and}\qquad
	\diadem G=\corona G\ii L(G),
	\]
	as required.
\end{proof}

Combining the previous results, we obtain the main characterization theorem.

\begin{theorem}\label{asdikjasoid}
	A graph $G$ satisfies $\core G=\nucleus G$ if
	and only if 
	\[
	\core{L_{G}^{c}}=\corona G\ii\partial_{L}(G)=\emptyset.
	\]
\end{theorem}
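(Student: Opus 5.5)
We prove the two implications separately, using \cref{asdokasokd} for sufficiency and \cref{asdiuj12oi3j}, \cref{asdklasdookok} and \cref{asodpkaspdokapo} for necessity.

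\emph{Sufficiency.} Assume $\core{L_{G}^{c}}=\emptyset$ and $\corona G\ii\partial_{L}(G)=\emptyset$. By \cref{asdiuj12oi3j}, $\core G=\core{L_G^c}\u(\core G\ii L(G))=\core G\ii L(G)$. By \cref{asdokasokd}, the boundary hypothesis gives $\nucleus G=\core G\ii L(G)$. Hence $\core G=\nucleus G$.

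\emph{Necessity.} Assume $\core G=\nucleus G$. Since $\nucleus G\subseteq I\subseteq L(G)$ for any $I\in\mcritical G$ (\cref{larsonthm}), we have $\core G\subseteq L(G)$. As $\core{L_G^c}\subseteq\core G$ by \cref{asdiuj12oi3j} and $\core{L_G^c}\subseteq L^c(G)$, it follows that $\core{L_G^c}=\emptyset$.

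It remains to show $\corona G\ii\partial_L(G)=\emptyset$. Suppose instead that $u\in\corona G\ii\partial_L(G)$. Fix $I\in\mcritical G$ and a maximum matching $M$ of $L_G$. Since $\partial_L(G)\subseteq N(I)$, we have $u\in N(I)$. As in the proof of \cref{asdklasdookok}, $M$ matches $N(I)$ into $I$, so $v=M(u)\in I$ is defined and $uv\in E(G)$.
\begin{itemize}
\item Since $u\in N(I)-(\corona G-\partial_L(G))$, \cref{asodpkaspdokapo} gives $v\in\nucleus G=\core G$.
\item Since $u\in\corona G$, some $S\in\Omega(G)$ contains $u$. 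Because $v\in\core G$, this same $S$ also contains $v$.
\end{itemize}
Then $S$ contains both endpoints of the edge $uv$, contradicting the independence of $S$. Hence $\corona G\ii\partial_L(G)=\emptyset$.

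The only step needing care is that $M(u)$ is a genuine neighbor of $u$, i.e. that $M$ saturates $N(I)$. This is the fact, established in the proof of \cref{asdklasdookok}, that every maximum matching of $L_G$ matches $N(I)$ into $I$. Everything else follows directly from the earlier results.
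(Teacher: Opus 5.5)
Your proof is correct and follows essentially the same route as the paper. Sufficiency goes through the core decomposition lemma and \cref{asdokasokd}. For necessity, you first get $\core G\subseteq L(G)$, and then derive a contradiction from $v=M(u)$ for a vertex $u\in\corona G\ii\partial_{L}(G)$.

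The only differences are cosmetic. You obtain $v\in\nucleus G$ by citing \cref{asodpkaspdokapo}, whereas the paper reproves that step inline: every $J\in\mcritical G$ misses $u$, hence contains $M(u)$. You also phrase the contradiction as a maximum independent set containing both ends of $uv$, rather than as $v\notin\core G$.
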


\begin{proof}
	Let $I\in\mcritical G$. By \cref{larsonthm}, $L(G)=I\u N(I)$.
	Since $\nucleus G\subseteq I\subseteq L(G)$ and $\core G=\nucleus G$, it follows that $\core G\subseteq L(G)$.
	By \cref{asdklasdookokasdklasdookokasdklasdookok},
	\[
	\core G=\core{L_{G}^{c}}\u\left(\core G\ii L(G)\right),
	\]
	and therefore $\core{L_{G}^{c}}=\emptyset$.

	Suppose now that there exists $u\in\corona G\ii\partial_{L}(G)$.
	Let $M$ be a maximum matching of $L_{G}$.
	As in the proof of \cref{asdklasdookok}, the matching $M$ matches $N(I)$ into $I$.
	Set $v=M(u)$.
	Since $u\in\corona G$, some maximum independent set $T$ of $G$ contains $u$.
	Because $uv\in E(G)$, we have $v\notin T$, and hence $v\notin\core G=\nucleus G$.

	On the other hand, let $J\in\mcritical G$.
	By \cref{asiodkasodkaso}, we have $J\in\Omega\left(L_{G}\right)$ and $E(J,L^{c}(G))=\emptyset$.
	Since $u\in\partial_{L}(G)$, it follows that $u\notin J$.
	As in the proof of \cref{asdklasdookok}, every maximum independent set of $L_{G}$
	contains exactly one endpoint of each edge of $M$.
	Therefore, $v=M(u)\in J$.
	Since $J$ was arbitrary, $v\in\nucleus G$, a contradiction.
	Hence $\corona G\ii\partial_{L}(G)=\emptyset$.
	
	\medskip
	
	Conversely, suppose that $\core{L_{G}^{c}}=\corona G\ii\partial_{L}(G)=\emptyset$.
	Then \cref{asdklasdookokasdklasdookokasdklasdookok} implies that $\core G=\core G\ii L(G)$.
	Hence, by \cref{asdokasokd},
	\[
	\core G=\core G\ii L(G)=\nucleus G,
	\]
	as claimed.
\end{proof}

\begin{corollary}\label{diademboundary}
	A graph $G$ satisfies
	\[
	\diadem G=\corona G\ii L(G)
	\]
	if and only if
	\[
	\corona G\ii \partial_{L}(G)=\emptyset.
	\]
\end{corollary}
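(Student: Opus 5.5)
The sufficiency direction is already done: if $\corona G\ii\partial_{L}(G)=\emptyset$, then \cref{asdokasokd} gives $\diadem G=\corona G\ii L(G)$ directly. So the work is in the converse. I will assume $\diadem G=\corona G\ii L(G)$, suppose some $u\in\corona G\ii\partial_{L}(G)$ exists, and derive a contradiction.

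The contradiction is immediate once the diadem is described through \cref{diademlarson}:
\[
\diadem G=\U\left\{ S\in\Omega\left(L_{G}\right):E(S,L^{c}(G))=\emptyset\right\}.
\]
By definition of $\partial_L(G)$, $u$ has a neighbour in $L^{c}(G)$. Hence $u$ lies in no set $S$ with $E(S,L^{c}(G))=\emptyset$, and so $u\notin\diadem G$. Equivalently, by \cref{larsonthm}, every $J\in\mcritical G$ satisfies $\partial_L(G)\subseteq N(J)$, so $u\notin J$.

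On the other hand, $u\in\corona G$ and $u\in\partial_L(G)\subseteq L(G)$, so $u\in\corona G\ii L(G)=\diadem G$. This contradicts the previous paragraph, so $\corona G\ii\partial_{L}(G)=\emptyset$.

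Both directions follow almost at once from \cref{asdokasokd} and \cref{diademlarson}. The only point needing care is that $\partial_L(G)\subseteq L(G)$ holds by definition, which is what puts $u$ into $\corona G\ii L(G)$. In fact this argument shows more: $\diadem G\ii\partial_L(G)=\emptyset$ always holds, so the identity fails exactly when the boundary meets the corona.
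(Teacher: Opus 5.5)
Your proof is correct and matches the paper's argument: sufficiency comes from \cref{asdokasokd}, and for the converse you take $u\in\corona G\ii\partial_{L}(G)$, place it in $\corona G\ii L(G)=\diadem G$, and contradict this using the fact that maximum critical independent sets send no edges to $L^{c}(G)$. The only difference is that the paper cites \cref{asiodkasodkaso} directly where you cite its reformulation \cref{diademlarson}, and your closing remark that $\diadem G\ii\partial_{L}(G)=\emptyset$ always holds is also correct.
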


\begin{proof}
	If $\corona G\ii \partial_{L}(G)=\emptyset$, then \cref{asdokasokd} yields $\diadem G=\corona G\ii L(G)$. Conversely, assume that $\diadem G=\corona G\ii L(G)$ and suppose that there exists $u\in \corona G\ii\partial_{L}(G)$. Since $u\in L(G)$, the assumed equality implies that $u\in\diadem G$, and therefore some set $J\in\mcritical G$ contains $u$. However, \cref{asiodkasodkaso} yields $E(J,L^{c}(G))=\emptyset$, which is impossible because $u\in\partial_{L}(G)$. Hence $\corona G\ii \partial_{L}(G)=\emptyset$.
\end{proof}

\begin{corollary}\label{coreeqnucleusdiadem}
	A graph $G$ satisfies $\core G=\nucleus G$ if and only if
	\[
	\core{L_{G}^{c}}=\emptyset
	\qquad\textnormal{and}\qquad
	\diadem G=\corona G\ii L(G).
	\]
\end{corollary}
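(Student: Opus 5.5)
The corollary follows by combining \cref{asdikjasoid} with \cref{diademboundary}; no new structural argument is needed. The plan is simply to substitute one boundary condition for the other.

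First, by \cref{asdikjasoid}, $\core G=\nucleus G$ holds if and only if both $\core{L_{G}^{c}}=\emptyset$ and $\corona G\ii\partial_{L}(G)=\emptyset$ hold. Second, by \cref{diademboundary}, the condition $\corona G\ii\partial_{L}(G)=\emptyset$ is equivalent to $\diadem G=\corona G\ii L(G)$, and this holds for every graph $G$, with no extra hypothesis on $L_G^c$. Replacing the second conjunct in \cref{asdikjasoid} by its equivalent from \cref{diademboundary} gives exactly the stated characterization: $\core G=\nucleus G$ if and only if $\core{L_{G}^{c}}=\emptyset$ and $\diadem G=\corona G\ii L(G)$.

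Both directions then follow from this chain of equivalences, so I would write the proof as a two-line chain of ``if and only if'' statements. The only point to check is that \cref{diademboundary} is unconditional, which it is as stated. I therefore expect no genuine obstacle.
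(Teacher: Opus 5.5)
Your proposal is correct and is the paper's proof: the paper also gets the corollary immediately from \cref{asdikjasoid} and \cref{diademboundary}. The latter is an unconditional equivalence between $\corona G\ii\partial_{L}(G)=\emptyset$ and $\diadem G=\corona G\ii L(G)$, so substituting it into the second conjunct of \cref{asdikjasoid} is valid.
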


\begin{proof}
	This follows immediately from \cref{asdikjasoid,diademboundary}.
\end{proof}

\cref{asdikjasoid} completes the solution of \cref{ioasdoiasi12}.
It reveals that the behavior of $\core G$, $\nucleus G$, and $\diadem G$ is completely governed by Larson's decomposition.

An almost-bipartite graph is a graph containing a unique odd cycle. In
an almost-bipartite non-König--Egerváry graph, the Larson decomposition
is known: $L_{G}$ is a bipartite graph, while
$L_{G}^{c}$ is an odd cycle \cite{lmkcorecritical}. Then $\core{L^c_G}=\emptyset$. Additionally, $L_{G}$
can be decomposed via the Gallai-Edmonds structure: $D(L_{G}),A(L_{G}),C(L_{G})$.
It is easy to see by \cref{ge} that 
\[
D(L_{G})\ii\partial_{L}(G)= A(L_G)\ii\corona G=\emptyset.
\]
Therefore, by \cref{asdikjasoid} the following holds.

\begin{theorem}
	 An almost bipartite non-König--Egerváry graph
$G$ satisfies $\core G=\nucleus G$ if and only if
\[
C(L_{G})\ii \corona G \ii\partial_{L}(G)=\emptyset.
\]
\end{theorem}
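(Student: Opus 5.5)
The plan is to reduce the statement to \cref{asdikjasoid}. For an almost bipartite non-König--Egerváry graph, $L_G$ is bipartite and $L_G^c$ is an odd cycle \cite{lmkcorecritical}. An odd cycle $C_{2k+1}$ has $\core{C_{2k+1}}=\emptyset$, since rotating a maximum independent set avoids any prescribed vertex. Hence, by \cref{asdikjasoid}, $\core G=\nucleus G$ if and only if $\corona G\ii\partial_{L}(G)=\emptyset$. It therefore suffices to show
\[
\corona G\ii\partial_{L}(G)=C(L_{G})\ii\corona G\ii\partial_{L}(G).
\]
Since $L(G)=D(L_G)\u A(L_G)\u C(L_G)$, this reduces to the two identities $D(L_{G})\ii\partial_{L}(G)=\emptyset$ and $A(L_G)\ii\corona G=\emptyset$.

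The first identity will come from \cref{asdokasdpokas}. We have $D(L_G)=\ker G$, and $\ker G\subseteq I$ for every $I\in\mcritical G$. By \cref{asiodkasodkaso}, $E(I,L^c(G))=\emptyset$, so no vertex of $D(L_G)$ has a neighbour in $L^c(G)$. That is, $D(L_G)\ii\partial_L(G)=\emptyset$.

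For the second identity, take $T\in\Omega(G)$. By \cref{larsonthm}, $T\ii L(G)\in\Omega(L_G)$. The proof of \cref{asdklasdookok} shows that any set in $\Omega(L_G)$ contains every vertex missed by some maximum matching of $L_G$, so $D(L_G)\subseteq T$. Every vertex of $A(L_G)$ has a neighbour in $D(L_G)$, so no vertex of $A(L_G)$ can lie in $T$. As $T$ was arbitrary, $A(L_G)\ii\corona G=\emptyset$; in fact this gives the stronger $D(L_G)\subseteq\core G$.

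Combining the two identities with \cref{asdikjasoid} yields the theorem. The structural input about odd cycles and $L_G$ is quoted from \cite{lmkcorecritical}, and the remaining steps are short, so I expect no real obstacle.
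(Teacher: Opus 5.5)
Your proof is correct and follows the paper's argument. Like the paper, you use $\core{L_G^c}=\emptyset$ for the odd cycle, apply \cref{asdikjasoid}, and strip the $D(L_G)$ and $A(L_G)$ parts from $\corona G\ii\partial_L(G)$. The only difference is that the paper just asserts $D(L_G)\ii\partial_L(G)=A(L_G)\ii\corona G=\emptyset$ as an easy consequence of \cref{ge}, whereas you prove both identities explicitly from $\ker G=D(L_G)$, the characterization of $\mcritical G$, and the structure of $\Omega(L_G)$ in the proof of \cref{asdklasdookok} — which also shows that these identities hold for every graph, not just almost bipartite ones.
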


\section{Conclusions}

In this paper we solved the problem posed in \cite{jarden2019monotonic} of characterizing the graphs satisfying $\core G=\nucleus G$. The solution is expressed in terms of Larson's independence decomposition
\[
V(G)=L(G)\u L^{c}(G).
\]
Our main theorem shows that the equality $\core G=\nucleus G$ is governed by two independent obstructions: the presence of vertices in $\core{L_{G}^{c}}$, which comes from the $2$-bicritical side, and the presence of vertices of $\corona G$ on the boundary $\partial_{L}(G)$, which measures the interaction between the two parts.

The auxiliary results obtained along the way provide a more detailed description of the maximum critical structure of a graph. In particular, \cref{asiodkasodkaso} identifies the family $\mcritical G$ with the maximum independent sets of $L_G$ that avoid $L_G^c$, while \cref{asdokasokd,diademboundary} show that the boundary condition $\corona G\ii \partial_{L}(G)=\emptyset$ is exactly the condition under which the maximum critical sets are the traces on $L(G)$ of the maximum independent sets of $G$. Consequently,
\[
\nucleus G=\core G\ii L(G)
\qquad\textnormal{and}\qquad
\diadem G=\corona G\ii L(G)
\]
whenever no vertex of $\corona G$ lies on the Larson boundary. For König--Egerváry graphs this recovers the classical identities $\nucleus G=\core G$ and $\diadem G=\corona G$. It is worth noting that $\diadem{G}=\corona {G}$ if and only if $G$ is a König--Egerváry graph \cite{Levit2015OnKC,short2015some}.

The almost-bipartite case illustrates the usefulness of the characterization. Since $L_G^c$ is an odd cycle in that setting, the obstruction coming from $\core{L_{G}^{c}}$ disappears automatically, and the problem reduces to a condition on the Gallai--Edmonds decomposition of the König--Egerváry part $L_G$. This suggests that Larson's decomposition provides a robust framework for further investigations relating maximum independent sets and maximum critical independent sets.

\section{Open problems}\label{open}

The results of this paper suggest several natural directions for further research.

\begin{problem}
	Give an explicit structural description of $\diadem G$ for an arbitrary graph $G$ in terms of the Larson decomposition $V(G)=L(G)\u L^{c}(G)$ and the boundary set $\partial_{L}(G)$.
\end{problem}

\begin{problem}
	Determine the algorithmic complexity of deciding whether a graph $G$ satisfies $\core G=\nucleus G$. More generally, study the complexity of computing $\nucleus G$ and $\diadem G$ from a Larson decomposition.
\end{problem}

\begin{problem}
	Develop weighted analogues of the notions of core, nucleus, corona, and diadem, and characterize when the weighted core coincides with the weighted nucleus.
\end{problem}

\section*{Acknowledgments}

	This work was partially supported by Universidad Nacional de San Luis, grants PROICO 03-0723 and PROIPRO 03-2923, MATH AmSud, grant 22-MATH-02, Consejo Nacional de Investigaciones
	Cient\'ificas y T\'ecnicas grant PIP 11220220100068CO and Agencia I+D+I grants PICT 2020-00549 and PICT 2020-04064.

	\section*{Declaration of generative AI and AI-assisted technologies in the writing process}
	During the preparation of this work the authors used ChatGPT-3.5 in order to improve the grammar of several paragraphs of the text. After using this service, the authors reviewed and edited the content as needed and take full responsibility for the content of the publication.

\section*{Data availability}

Data sharing not applicable to this article as no datasets were generated or analyzed during the current study.

\section*{Declarations}

\noindent\textbf{Conflict of interest} \ The authors declare that they have no conflict of interest.

\bibliographystyle{apalike}

\bibliography{TAGcitasV2025_revised_bibfix}

\end{document}